\newcommand{\leqnomode}{\tagsleft@true}
\newcommand{\reqnomode}{\tagsleft@false}
\numberwithin{equation}{section}
\newtheorem{theorem}{Theorem}[section]
\newtheorem{lemma}[theorem]{Lemma}
\newtheorem{proposition}[theorem]{Proposition}
\newtheorem{remark}[theorem]{Remark}
\newtheorem{theoremletter}{Theorem}
\renewcommand{\rightarrow}{\to}
\newcommand{\ud}{\mathrm{d}}
\title[Existence of bound and ground states for fractional coupled systems in $\mathbb{R}^{N}$]{Existence of bound and ground states for fractional coupled systems in $\mathbb{R}^{N}$}
\author[J.C. \ de Albuquerque]{Jos\'e Carlos de Albuquerque}
\author[J.M.\ do \'O]{Jo\~ao Marcos do \'O}
\author[E.D. Silva]{Edcarlos Domingos Silva}
\address[J.C. de~Albuquerque]{Instituto de Matem\'{a}tica e Estat\'{i}stica,
	Universidade Federal de Goi\'{a}s
	\newline\indent
	74001-970, Goi\'{a}s-GO, Brasil}
\email{\href{mailto:joserre@gmail.com}{joserre@gmail.com}}
\address[J.M. do \'O]{Departamento de Matem\'{a}tica,
	Universidade Federal da Para\'{\i}ba
	\newline\indent
	58051-900, Jo\~ao Pessoa-PB, Brasil}
\email{\href{mailto:jmbo@pq.cnpq.br}{jmbo@pq.cnpq.br}}
\address[E.D. Silva]{Instituto de Matem\'{a}tica e Estat\'{i}stica,
	Universidade Federal de Goi\'{a}s}
\email{\href{mailto:eddomingos@hotmail.com}{eddomingos@hotmail.com}}
\thanks{Corresponding author: Edcarlos D. Silva}
\thanks{Research supported in part by INCTmat/MCT/Brazil, CNPq and CAPES/Brazil. The third author was also partially supported by Fapeg/CNpq grants 03/2015-PPP}
\subjclass[2010]{35J50, 47G20.}
\date{\today}
\keywords{Fractional elliptic system, Nonquadraticity condition, Ground state solutions.}
\begin{document}
	

\begin{abstract}
	In this work we consider the following class of nonlocal linearly coupled systems involving Schr\"{o}dinger equations with fractional laplacian
	 $$
	  \left\{
	  \begin{array}{lr}
	  (-\Delta)^{s_{1}} u+V_{1}(x)u=f_{1}(u)+\lambda(x)v, & x\in\mathbb{R}^{N},\\
	  (-\Delta)^{s_{2}} v+V_{2}(x)v=f_{2}(v)+\lambda(x)u, & x\in\mathbb{R}^{N},
	  \end{array}
	  \right.
	 $$
where $(-\Delta)^{s}$ denotes de fractional Laplacian, $s_{1},s_{2}\in(0,1)$ and $N\geq2$. The coupling function $\lambda:\mathbb{R}^{N} \rightarrow \mathbb{R}$ is related with the potentials by $|\lambda(x)|\leq \delta\sqrt{V_{1}(x)V_{2}(x)}$, for some $\delta\in(0,1)$. We deal with periodic and asymptotically periodic bounded potentials. On the nonlinear terms $f_{1}$ and $f_{2}$, we assume ``superlinear" at infinity and at the origin. We use a variational approach to obtain the existence of bound and ground states without assuming the well known Ambrosetti-Rabinowitz condition at infinity. Moreover, we give a description of the ground states when the coupling function goes to zero.
\end{abstract}

\maketitle

	

\section{Introduction}	
In this work we consider the following class of linearly coupled systems involving Schr\"{o}dinger equations with fractional laplacian
\begin{equation}\label{j000}
\left\{
\begin{array}{lr}
(-\Delta)^{s_{1}} u+V_{1}(x)u=f_{1}(u)+\lambda(x)v, & x\in\mathbb{R}^{N},\\
(-\Delta)^{s_{2}} v+V_{2}(x)v=f_{2}(v)+\lambda(x)u, & x\in\mathbb{R}^{N},
\end{array}
\right.
\end{equation}
where $N\geq2$, $s_{1},s_{2}\in(0,1)$ and $V_{1},V_{2}$ are bounded and continuous potentials. Here $\lambda: \mathbb{R}^{N} \rightarrow \mathbb{R}$ is also a bounded and continuous function satisfying some suitable hypotheses. It is worthwhile to mention that a solution $(u,v)$ of finite energy for \eqref{j000} is called a \textit{bound state solution}. It is well known that $(u,v)\neq(0,0)$ is called \textit{ground state solution} if admits the smallest energy among all nontrivial bound states of \eqref{j000}. Our main contribution here is to consider fractional Schr\"{o}dinger equations which are linearly coupled finding existence of bound and ground state solutions. We deal with two classes of potentials: periodic and asymptotically periodic. Furthermore, we study the behavior of the ground state solutions when the coupling function goes to zero. By using variational arguments we get our main results taking into account that the nonlinear terms does not verify the well known Ambrosetti-Rabinowitz condition.


\subsection{Motivation and related results} In order to motivate our results we begin by giving
a brief survey on this subject. In the last few years, a great attention
has been focused on the study of problems involving fractional Sobolev spaces and corresponding nonlocal equations, both from a pure mathematical point of view and their concrete applications, since they naturally arise in many different contexts, such as, among the others, obstacle problems, flame propagation, minimal surfaces, conservation laws, financial market, optimization, crystal dislocation, phase transition and water waves, see for instance \cite{guia,caffa} and references therein.

Coupled elliptic systems arise in various branches of mathematical physics and nonlinear optics (see \cite{MR1212560}). Solutions of System~\eqref{j000} are related with standing wave solutions of the following two-component system of nonlinear equations
 \begin{equation}\label{j00}
 \left\{
 \begin{array}{ll}
 i\displaystyle\frac{\partial\psi}{\partial t}=(-\Delta)^{s_{1}}\psi+V_{1}(x)\psi-f_{1}(\psi)-\lambda(x)\phi, & x\in \mathbb{R}^{N}, \ t\geq0,\\
 i\displaystyle\frac{\partial\phi}{\partial t}=(-\Delta)^{s_{2}}\phi+V_{2}(x)\phi-f_{2}(\phi)-\lambda(x)\psi, & x\in \mathbb{R}^{N}, \ t\geq0,
 \end{array}
 \right.
 \end{equation}
where $i$ denotes the imaginary unit. For System~\eqref{j00}, a solution of the form $(\psi(x,t),\phi(x,t))=(e^{-it}u(x),e^{-it}v(x))$
is called \textit{standing wave}. Assuming that $f_{1}(e^{i\theta}u)=e^{i\theta}f_{1}(u)$ and $f_{2}(e^{i\theta}v)=e^{i\theta}f_{2}(v)$, for $u,v\in\mathbb{R}$, it is well known that $(\psi,\phi)$ is a solution of \eqref{j00} if and only if $(u,v)$ solves System~\eqref{j000}. For more information on the physical background we refer the readers to \cite{MR1212560,MR695535,bl2,l1,l2,MR0454365} and references therein.	

Notice that if $\lambda\equiv0$, $s_{1}=s_{2}=s$ and $V_{1}=V_{2}=V$, then System~\eqref{j000} reduces to the general class of nonlinear fractional Schr\"{o}dinger equations $(-\Delta)^{s} u+V(x)u=f(u)$, in $\mathbb{R}^{N}$. It is known that when $s\rightarrow1$, the fractional Laplacian $(-\Delta)^{s}$ reduces to the standard Laplacian $-\Delta$, see \cite{guia}. There is a huge bibliography concerned to nonlinear Schr\"{o}dinger equations, we refer the classical works \cite{rabino,giovany,wang,MR1306679} and references therein. Recently, fractional Schr\"{o}dinger equations have been studied under many different assumptions on the potential $V(x)$ and on the nonlinearity $f(u)$. For instance, in \cite{felmer}, in order to overcome the lack of compactness, the authors used a comparison argument to obtain positive solutions for the case when $V\equiv1$. Another way to overcome this difficulty is requiring coercive potentials, that is, $V(x)\rightarrow+\infty$, as $|x|\rightarrow+\infty$. In this direction, we refer the readers to \cite{cheng,secchi}. For existence results involving another classes of potentials, we refer \cite{chang,feng,pala,secchii} and references therein.

There are some papers that have appeared in the recent years regarding the local case of System~\eqref{j000}, which corresponds to the case $s_{1}=s_{2}=1$. In \cite{MR2447960,MR2414222}, A. Ambrosetti et al. considered the following class of linearly coupled systems involving subcritical terms of the form
 $$
  \left\{
  \begin{array}{lr}
  -\Delta u+\mu u=(1+a(x))|u|^{p-1}u+\lambda v, & x\in\mathbb{R}^{N},\\
  -\Delta v+\nu v=(1+b(x))|v|^{q-1}v+\lambda u, & x\in\mathbb{R}^{N}.
  \end{array}
  \right.
 $$
They used concentration compactness type arguments to prove the existence of positive bound and ground states when $\mu=\nu=1$, $\lambda\in(0,1)$, $1<p=q<2^{*}-1$, $a(x)$ and $b(x)$ vanishing at infinity. In \cite{MR2829504}, Z. Chen and W. Zou extended and complemented some results introduced in \cite{MR2414222} by studying the following class of coupled systems
 \begin{equation}\label{s}
  \left\{
  \begin{aligned}
  -\Delta u+\mu u=f_{1}(u)+\lambda v, & \quad x\in\mathbb{R}^{N},\\
  -\Delta v+\nu v=f_{2}(v)+\lambda u, & \quad x\in\mathbb{R}^{N}.
  \end{aligned}
  \right.
 \end{equation}
In \cite{jr1,jr2}, the authors studied the existence of positive ground states for System~\eqref{s} when $N=2$ and $\mu=V_{1}(x)$, $\nu=V_{2}(x)$, $\lambda=\lambda(x)$ are nonnegative functions satisfying suitable assumptions. For more existence results regarding to coupled systems in the local case, we refer the readers to \cite{dc,giovany2,maia2,MR1924476,MR2718702,MR2885948,MR2263573} and references therein. However, there are few works regarding to coupled systems in the nonlocal case, that is, when $s_{1},s_{2}\in(0,1)$. In \cite{guo}, it was studied the following class of coupled systems
  \begin{equation}\label{fra}
 \left\{
 \begin{array}{ll}
 (-\Delta)^{s} u+ u=(|u|^{2p}+b|u|^{p-1}|v|^{p+1})u, & \quad x\in\mathbb{R}^{N},\\
 (-\Delta)^{s} v+ \omega^{2s} v=(|v|^{2p}+b|v|^{p-1}|u|^{p+1})v, & \quad x\in\mathbb{R}^{N},
 \end{array}
 \right.
 \end{equation}
where $\omega>0$, $b>0$ and $2<2p+2<2^{*}_{s}$. By using the Nehari manifold method, the authors proved the existence of ground states for the nonlocal system~\eqref{fra}. Moreover, they proved that if $b>0$ is large enough, then System~\eqref{fra} admits a positive ground state solution. Their results extend and complement the results obtained in \cite{MR2263573} for the local case. In \cite{lu}, it was considered the fractional linearly coupled system \eqref{j000} involving Berestycki-Lions type nonlinearities. In \cite{jr3}, the authors studied System~\eqref{j000} involving nonlinear terms with critical exponential growth of Trudinger-Moser type. We also refer the readers to \cite{choi,fs1} and references therein.


It is important to emphasize that in most of the cited works it was considered the case where the nonlinear term is a powerlike function or a sum of powerlike functions. In this setting was proved several results concerned in existence of solutions with different assumptions on the potentials.  For example, in \cite{MR2263573} was a considered the case where $f_{1}, f_{2}$ are a cubic. Another works considered a more general nonlinear term which satisfies the well known Ambrosetti-Rabinowitz condition at infinity. Namely, for $i=1,2$ there exist $\theta_{i} > 2$ in such way that
\begin{equation}\label{ar}
0 < \theta_{i} F_{i}(t) =\theta_{i}\int_{0}^{t} f_{i}(\tau)\,\ud \tau\leq t f_{i}(t), \quad \mbox{for all} \hspace{0,2cm} t \in \mathbb{R}. \tag{AR}
\end{equation}
Under this condition it follows that any Palais-Smale sequence is bounded. However, there are some superlinear functions $f_{i}$ such that \eqref{ar} is not satisfied, see Remark~\ref{r1}. In the present paper we consider the nonquadraticity condition at infinity introduced by D.G.~Costa and C.A.~Magalh\~aes \cite{MR1306679}. Taking into account the nonquadraticity condition, we are able to prove that any Cerami sequence for the energy functional associated to System~\eqref{j000} is bounded. This is a powerful tool in order to recover the compactness required in variational procedures.



The class of systems considered here imposes several difficulties. The first one is the presence of the fractional laplacian which is a nonlocal operator, that is, it takes care of the behavior of the solution in the whole space. This class of systems is also characterized by its lack of compactness inherent to problem defined on unbounded domains. Here we emphasize that we consider potentials $V_{1}, V_{2}$ that are bounded from below and above by positive constants. Then the loss of compactness provide a serious difficulty in order to guarantee existence of solutions for the System \eqref{j000}. Another obstacle is the fact that the nonlinearities does not verify the well known Ambrosetti-Rabinowitz condition. Moreover, the Schr\"{o}dinger equations are strongly coupled because of the linear terms in the right hand side of System~\eqref{j000}. In order to overcome these difficulties, we apply a fractional version of a result due to P.L. Lion's (see Lemma~\ref{lions}) and we explore the fact that $V_{1},V_{2}$ are periodic or asymptotically periodic. Our approach is variational based on a minimization technique over the Nehari manifold. To our best acknowledgment this is the first work where it is proved the existence of ground states for this class of systems under assumptions involving periodic and asymptotically periodic potentials and nonlinearities which do not satisfy the Ambrosetti-Rabinowitz at infinity.


\subsection{Assumptions and main theorems}

Initially, we deal with the following class of coupled systems
\begin{equation}\label{j0}
	\left\{
	\begin{array}{lr}
		(-\Delta)^{s_{1}} u+V_{1,p}(x)u=f_{1}(u)+\lambda_{p}(x)v, & x\in\mathbb{R}^{N},\\
		(-\Delta)^{s_{2}} v+V_{2,p}(x)v=f_{2}(v)+\lambda_{p}(x)u, & x\in\mathbb{R}^{N},
	\end{array}
	\right. \tag{$S_{\lambda,p}$}
\end{equation}
where $V_{1,p}$, $V_{2,p}$ and $\lambda_{p}$ are $1$-periodic functions for each $x_{1},x_{2},...,x_{N}$. In order to establish a variational approach to treat System~\eqref{j0}, we need to require some suitable assumptions on the potentials $V_{1,p}$ and $V_{2,p}$. For each $i=1,2$, we assume that:

\begin{enumerate}[label=($V_1$),ref=$(V_1)$]
	\item \label{v1}
		$V_{i,p}\in C(\mathbb{R}^{N})$ and there is a constant $V_{p}>0$ such that $V_{i,p}(x)\geq V_{p}$, for all $x\in\mathbb{R}^{N}$.
\end{enumerate}

\begin{enumerate}[label=($V_{2}$),ref=$(V_{2})$]
	\item \label{A3}
	$|\lambda_{p}(x)|\leq\delta\sqrt{V_{1,p}(x)V_{2,p}(x)}$, for some $\delta\in(0,1)$, for all $x\in\mathbb{R}^{N}$.
\end{enumerate}

\noindent Since we are looking for positive solutions we suppose that $f_{i}(s)=0$ for all $s\leq0$. Furthermore, for $i=1,2$, we make the following assumptions on the nonlinearities:

\begin{enumerate}[label=($H_1$),ref=$(H_1)$]
	\item \label{f1}
	  $f_{i}\in C(\mathbb{R})$ and satisfies
	   $$
	    \lim_{t\rightarrow 0^{+}}\frac{f_{i}(t)}{t}=0 \quad \mbox{and} \quad \lim_{t\rightarrow+\infty}\frac{f_{i}(t)}{t}=+\infty.
	   $$
\end{enumerate}

\begin{enumerate}[label=($H_2$),ref=$(H_2)$]
	\item \label{f2}
     There exist $a_{1}>0$ and $p_{i}\in (2,2^{*}_{s_{i}})$ such that
      $$
       |f_{i}(t)|\leq a_{1}(1+t^{p_{i}-1}), \quad \mbox{for all} \hspace{0,2cm} t>0.
      $$
\end{enumerate}
\begin{enumerate}[label=($H_3$),ref=$(H_3)$]
	\item \label{f3}
	There exist $a_{2}>0$ and $\alpha>\frac{N}{2}(p_{0}-2)$ such that
	 $$
	  f_{i}(t)t-2F_{i}(t)\geq a_{2}t^{\alpha}, \quad \mbox{for all} \hspace{0,2cm} t>0,
	 $$
	where $F_{i}(t)=\int_{0}^{t}f_{i}(\tau)\,\ud \tau$ and $p_{0}=\max\{p_{1},p_{2}\}$.
\end{enumerate}

\noindent Now we can state our first result in following form:

\begin{theorem}[Periodic case]\label{A}
	Suppose \ref{v1}, \ref{A3}, \ref{f1}-\ref{f3}. Then System~\eqref{j0} admits at least one nontrivial weak solution. If $\lambda_{p}(x)>0$ for all $x\in\mathbb{R}^{N}$, then
System~\eqref{j0} admits at least one weak solution which is strictly positive.
\end{theorem}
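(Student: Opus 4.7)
\emph{Variational framework and geometry.} I would set the problem in $E = H^{s_1}(\mathbb{R}^N) \times H^{s_2}(\mathbb{R}^N)$ with the inner product weighted by $V_{1,p}, V_{2,p}$; by \ref{v1} this is equivalent to the standard one. Assumption \ref{A3} together with Cauchy--Schwarz yields
\[
\Bigl|\int_{\mathbb{R}^{N}} \lambda_{p}(x)\, uv \, \ud x \Bigr| \leq \delta \|u\|_{V_{1,p}} \|v\|_{V_{2,p}},
\]
so the quadratic form $Q(u,v) := \|u\|_{V_{1,p}}^{2} + \|v\|_{V_{2,p}}^{2} - 2\int \lambda_{p}\, uv\, \ud x$ is $(1-\delta)$-coercive. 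The associated energy
\[
I_{p}(u,v) = \tfrac{1}{2} Q(u,v) - \int_{\mathbb{R}^{N}} F_{1}(u) + F_{2}(v) \, \ud x
\]
is $C^{1}$ by \ref{f1}--\ref{f2} and exhibits mountain-pass geometry: \ref{f1}--\ref{f2} give $\int F_{i}(u) = o(\|u\|^{2})$ near the origin, while superlinearity at infinity drives $I_{p}(tu,tv) \to -\infty$ along any positive direction.

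\emph{Nehari reduction.} I would introduce $\mathcal{N}_{p} = \{(u,v)\in E\setminus\{(0,0)\} : \langle I_{p}'(u,v),(u,v)\rangle = 0\}$ and $c_{p} := \inf_{\mathcal{N}_{p}} I_{p}$. Using \ref{f1}--\ref{f3} the map $t \mapsto I_{p}(tu,tv)$ has a unique critical point on $(0,\infty)$, namely its global maximum, so $c_{p}>0$ coincides with the mountain-pass level. An application of Ekeland's variational principle on $\mathcal{N}_{p}$ produces a Cerami sequence $(u_{n},v_{n}) \subset \mathcal{N}_{p}$ with $I_{p}(u_{n},v_{n}) \to c_{p}$ and $(1+\|(u_{n},v_{n})\|)\, I_{p}'(u_{n},v_{n}) \to 0$.

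\emph{Boundedness via \ref{f3} --- the main technical hurdle.} Without (AR) I cannot just combine $I_{p}$ and $I_{p}'$ linearly. Instead, arguing by contradiction, suppose $t_{n} := \|(u_{n},v_{n})\| \to \infty$ and set $(w_{n},z_{n}) := (u_{n},v_{n})/t_{n}$, bounded in $E$. From the Cerami condition and \ref{f3},
\[
c_{p} + o(1) = I_{p}(u_{n},v_{n}) - \tfrac{1}{2} \langle I_{p}'(u_{n},v_{n}),(u_{n},v_{n})\rangle \geq \tfrac{a_{2}}{2} \int_{\mathbb{R}^{N}} (u_{n}^{\alpha} + v_{n}^{\alpha})\, \ud x,
\]
so $\{(u_{n},v_{n})\}$ is uniformly bounded in $L^{\alpha}\times L^{\alpha}$. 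Passing to a weak limit $(w,z)$ of $(w_{n},z_{n})$: if $(w,z)\neq 0$, superlinearity in \ref{f1} applied pointwise where $w+z>0$ combined with Fatou forces $\int (F_{1}(u_{n}) + F_{2}(v_{n}))/t_{n}^{2} \to \infty$, contradicting the fact that $Q(w_{n},z_{n})$ is bounded; if $(w,z)\equiv 0$, the Lions-type vanishing gives $w_{n}\to 0$ in $L^{q}$ for $q\in(2,2^{*}_{s_{i}})$, and Gagliardo--Nirenberg interpolation between the $L^{\alpha}$ bound on $(u_{n},v_{n})$ and the $L^{2^{*}_{s_{i}}}$ bound inherited from $E$ --- for which the exponent constraint $\alpha > \tfrac{N}{2}(p_{0}-2)$ is \emph{exactly} what is needed --- yields $\int F_{i}(u_{n})/t_{n}^{2} \to 0$, again contradicting $\tfrac{1}{2}Q(w_{n},z_{n}) \geq (1-\delta)/2 > 0$. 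Hence $(u_{n},v_{n})$ is bounded.

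\emph{Compactness from periodicity and positivity.} Let $(u_{n},v_{n})\rightharpoonup (u,v)$ weakly. I would first rule out vanishing: if both $u_{n}$ and $v_{n}$ vanished in the sense of Lions, the fractional Lions lemma~\ref{lions} together with \ref{f1}--\ref{f2} would force $\int f_{i}(u_{n})u_{n},\, \int f_{i}(v_{n})v_{n} \to 0$, whence $Q(u_{n},v_{n}) = o(1)$ via $\langle I_{p}'(u_{n},v_{n}),(u_{n},v_{n})\rangle = o(1)$, contradicting $c_{p}>0$. Thus there exist $y_{n}\in\mathbb{R}^{N}$ and $\eta, R>0$ with $\int_{B_{R}(y_{n})} u_{n}^{2}+v_{n}^{2} \geq \eta$; replacing $y_{n}$ by the nearest lattice point of $\mathbb{Z}^{N}$ and exploiting that the $1$-periodicity of $V_{1,p}, V_{2,p}, \lambda_{p}$ makes $I_{p}$ and $\mathcal{N}_{p}$ invariant under integer translations, the shifted sequence is another Cerami sequence at level $c_{p}$ with nontrivial weak limit $(u,v)$. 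Passing to the limit in $I_{p}'$ (Br\'ezis--Lieb for the nonlocal seminorm plus dominated convergence for the nonlinear pieces) gives that $(u,v)$ is a nontrivial critical point of $I_{p}$, and Fatou on $f_{i}(\tau)\tau - 2F_{i}(\tau)\geq 0$ yields $I_{p}(u,v)\leq c_{p}$, so $(u,v)$ is in fact a ground state. When $\lambda_{p}>0$, replacing $(u,v)$ by $(|u|,|v|)$ does not increase $I_{p}$ (fractional P\'olya--Szeg\H{o}-type inequality for the seminorm, positivity of $\lambda_{p}$, and the fact that $F_{i}\equiv 0$ on $(-\infty,0]$); after rescaling back to $\mathcal{N}_{p}$ one obtains a nonnegative ground state, and the strong maximum principle for the fractional Laplacian (applied separately to each equation, whose right-hand side is nonnegative and nontrivial) upgrades it to strictly positive.
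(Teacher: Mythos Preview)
Your Nehari reduction is not justified under the hypotheses of Theorem~\ref{A}. The uniqueness of the positive critical point of $t\mapsto I_{p}(tu,tv)$, and more importantly the fact that $\mathcal{N}_{p}$ is a natural constraint (so that an Ekeland sequence on $\mathcal{N}_{p}$ is a Cerami sequence for the \emph{unconstrained} functional), both rely on the monotonicity of $f_{i}(t)/t$, i.e.\ on \ref{f4}. That hypothesis is only introduced for Theorem~\ref{B}; here you have only \ref{f1}--\ref{f3}, and under these the fibering map may well have several critical points and the Lagrange multiplier coming out of Ekeland need not vanish. The paper sidesteps this entirely: it applies Schechter's Cerami version of the Mountain Pass Theorem directly to $I_{\lambda,p}$ to produce a Cerami sequence at the mountain-pass level, so no Nehari structure is needed, and accordingly it only claims a nontrivial (not ground-state) solution.

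Two further differences are worth noting. For boundedness the paper does not argue by contradiction: from $I_{\lambda,p}(u_{n},v_{n})-\tfrac{1}{2}\langle I_{\lambda,p}'(u_{n},v_{n}),(u_{n},v_{n})\rangle$ and \ref{f3} one gets $\|(u_{n},v_{n})\|_{\alpha}\leq C$ directly, and then a single interpolation between $L^{\alpha}$ and $L^{2^{*}_{s_{i}}}$ (this is where $\alpha>\tfrac{N}{2}(p_{0}-2)$ enters, giving $(1-t)p_{0}<2$) bounds $\|(u_{n},v_{n})\|_{E_{p}}$ outright; your dichotomy on the normalized weak limit is unnecessary, and your identification of ``$(w,z)=0$ weakly'' with Lions vanishing is not quite correct as stated. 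For positivity the paper does not pass to $(|u|,|v|)$ and rescale (which again would need the Nehari machinery): instead it first shows, by testing with $(-u_{n}^{-},-v_{n}^{-})$ and using \ref{A3}, that $\|(u_{n}^{-},v_{n}^{-})\|\to 0$, so the Cerami sequence may be taken nonnegative; the weak limit is then automatically nonnegative, a short argument with $\lambda_{p}>0$ shows both components are nontrivial, and the strong maximum principle finishes.
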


We are also concerned with the existence of solutions for the following class of coupled systems	
\begin{equation}\label{paper1jap}
	\left\{
	\begin{array}{lr}
		(-\Delta)^{s_{1}} u+V_{1}(x)u=f_{1}(u)+\lambda(x)v, & \quad x\in\mathbb{R}^{N},\\
		(-\Delta)^{s_{2}} v+V_{2}(x)v=f_{2}(v)+\lambda(x)u, & x\in\mathbb{R}^{N},
	\end{array}
	\right. \tag{$S_{\lambda}$}
\end{equation}
when the potentials $V_{1},$ $V_{2}$ and $\lambda$ satisfy an asymptotic periodicity condition at infinity. More specifically, for any $\varepsilon>0$, we define the following class of functions
 $$
  \mathcal{F}:=\left\{g\in C(\mathbb{R}^{N})\cap L^{\infty}(\mathbb{R}^{N}):\left|\{x\in\mathbb{R}^{N}:|g(x)|\geq\varepsilon\}\right|<\infty \right\},
 $$
where $|\cdot|$ denotes the Lebesgue measure of a set. For $i=1,2$, we assume the following hypotheses:

\begin{enumerate}[label=($V_3$),ref=$(V_3)$]
	\item \label{v3}
    $V_{i,p}-V_{i}\in\mathcal{F}$ and there is a constant $V_{0}>0$ such that $V_{i,p}(x)> V_{i}(x)\geq V_{0}$, for all $x\in\mathbb{R}^{N}$.
\end{enumerate}

\begin{enumerate}[label=($V_{4}$),ref=$(V_{4})$]
	\item \label{v4}
	$\lambda_{p}-\lambda\in\mathcal{F}$, $\lambda(x)>\lambda_{p}(x)$ and $|\lambda(x)|\leq\delta\sqrt{V_{1}(x)V_{2}(x)}$, for some $\delta\in(0,1)$, for all $x\in\mathbb{R}^{N}$.
\end{enumerate}
\noindent The assumptions \ref{v3} and \ref{v4} imply that $V_{1}$, $V_{2}$ and $\lambda$ are perturbations of periodic functions at infinity. This class of asymptotic periodic functions was introduced by Elves A.B. Silva and Haendel F. Lins in \cite{MR2532816}.

In order to obtain a ground state solution, we also consider the following hypothesis:
\begin{enumerate}[label=($H_4$),ref=$(H_4)$]
	\item \label{f4}
	$\displaystyle\frac{f_{i}(t)}{t}$ is increasing on $(0,+\infty)$.
\end{enumerate}
The assumption \ref{f4} allows us to compare the mountain pass level with the energy level associated with Nehari manifold (see Lemma \ref{nehari}). Under these conditions we are able to state our second main result which can be write in the following form:
\begin{theorem}[Asymptotically periodic case]\label{B}
Suppose \ref{v1}-\ref{v4}, \ref{f1}-\ref{f4}.  Then System~\eqref{paper1jap} admits at least one ground state solution. If $\lambda(x)>0$ for all $x\in\mathbb{R}^{N}$, then the ground state is strictly positive.
\end{theorem}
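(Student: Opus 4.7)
The plan is to work with the energy functional
$$ I(u,v) = \tfrac12 \|(u,v)\|^2 - \int_{\mathbb{R}^N}\lambda\, u v \,dx - \int_{\mathbb{R}^N} F_1(u)\,dx - \int_{\mathbb{R}^N} F_2(v)\,dx $$
on $E = H^{s_1}(\mathbb{R}^N)\times H^{s_2}(\mathbb{R}^N)$ endowed with $\|(u,v)\|^2 = [u]^2_{s_1}+\int V_1 u^2\,dx + [v]^2_{s_2}+\int V_2 v^2\,dx$, equivalent to the standard Sobolev norm by~\ref{v3}. Hypothesis~\ref{v4} together with Young's inequality gives $2|\lambda uv|\leq \delta(V_1 u^2+V_2 v^2)$, so the quadratic part of $I$ is $(1-\delta)$-coercive on $E$, and~\ref{f1}-\ref{f2} yield the mountain pass geometry. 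Let $c>0$ denote the mountain pass level.

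Next I would extract a Cerami sequence $(u_n,v_n)$ at level $c$ and prove it is bounded. Since~\eqref{ar} is not assumed, this is where~\ref{f3} enters: from $I(u_n,v_n)-\tfrac12 I'(u_n,v_n)(u_n,v_n)=c+o(1)$ I obtain $\|u_n\|_{L^\alpha}^\alpha+\|v_n\|_{L^\alpha}^\alpha$ bounded, and the constraint $\alpha>(N/2)(p_0-2)$ is exactly what is needed to interpolate $L^{p_0}$ between $L^\alpha$ and $L^{2^\ast_{s_i}}$, preventing blow-up of the norm via a Costa–Magalh\~aes style estimate. Along a subsequence $(u_n,v_n)\rightharpoonup(u_0,v_0)$ in $E$, and standard weak-to-strong convergence arguments show $(u_0,v_0)$ is a critical point of $I$.

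The crucial step is to prove $(u_0,v_0)\neq 0$. Applying the fractional Lions lemma~\ref{lions}: either the sequence vanishes in $L^q$ for some $q\in(2,2^\ast_{s_i})$, which via~\ref{f1}-\ref{f2} forces $\int f_i(u_n)u_n,\,\int f_i(v_n)v_n\to 0$ and hence $c=0$, a contradiction; or there exist $y_n\in\mathbb{R}^N$ and $R,\beta>0$ with $\int_{B_R(y_n)}(u_n^2+v_n^2)\geq \beta$. If $(y_n)$ is bounded, then $(u_0,v_0)\neq 0$. Otherwise $|y_n|\to\infty$, and the translates $\tilde u_n = u_n(\cdot+y_n)$, $\tilde v_n = v_n(\cdot+y_n)$ have a nontrivial weak limit $(\tilde u,\tilde v)$. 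Using that $V_{i,p}-V_i,\,\lambda_p-\lambda\in\mathcal F$ and passing to a further subsequence so that the translates of the periodic coefficients converge (by periodicity), $(\tilde u,\tilde v)$ turns out to be a nontrivial critical point of the periodic functional $I_p$ associated with~\eqref{j0}, and a Fatou argument on the nonnegative integrand $\tfrac12 f_i(t)t-F_i(t)$ (which is bounded below by~\ref{f3}) yields $I_p(\tilde u,\tilde v)\leq c$.

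The contradiction comes from the strict comparison $c<c_p$, where $c_p=\inf_{\mathcal N_p} I_p$ is the Nehari level for~\eqref{j0}. Under~\ref{f4} the fibering map $t\mapsto I(tu,tv)$ has a unique positive critical point, so Lemma~\ref{nehari} gives $c=\inf_{\mathcal N} I$ and similarly $c_p=\inf_{\mathcal N_p}I_p$; Theorem~\ref{A} combined with~\ref{f4} produces a positive $(u_p,v_p)\in\mathcal N_p$ with $I_p(u_p,v_p)=c_p$. For every $t>0$,
$$ I(tu_p,tv_p)=I_p(tu_p,tv_p)-\tfrac{t^2}{2}\int(V_{1,p}-V_1)u_p^2\,dx-\tfrac{t^2}{2}\int(V_{2,p}-V_2)v_p^2\,dx-t^2\int(\lambda-\lambda_p)u_p v_p\,dx, $$
and~\ref{v3}-\ref{v4} together with $u_p,v_p>0$ make the three subtracted terms strictly positive, so $c\leq\max_{t>0}I(tu_p,tv_p)<\max_{t>0}I_p(tu_p,tv_p)=c_p$. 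This contradicts $c_p\leq I_p(\tilde u,\tilde v)\leq c$, forcing $(u_0,v_0)\neq 0$; a final Fatou argument applied to $\tfrac12 f_i(t)t-F_i(t)$ gives $I(u_0,v_0)=c$, so $(u_0,v_0)\in\mathcal N$ is a ground state. When $\lambda>0$, replacing $(u_0,v_0)$ by $(|u_0|,|v_0|)$ stays in $\mathcal N$ and does not increase the energy (the term $-\int\lambda uv$ only becomes more negative), so we may assume $u_0,v_0\geq 0$, and the strong maximum principle for $(-\Delta)^{s_i}$ then yields strict positivity. The main obstacle is pinning down the strict inequality $c<c_p$: it is precisely the interplay between the strict comparisons in~\ref{v3}-\ref{v4} and the Nehari characterization coming from~\ref{f4} that restores the compactness otherwise lost to the non-coercive potentials.
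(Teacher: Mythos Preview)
Your argument is essentially correct but follows a genuinely different route from the paper at the decisive compactness step. When the weak limit $(u_0,v_0)$ of the Cerami sequence is trivial, both you and the paper translate along $(y_n)$ to obtain a nontrivial critical point $(\tilde u,\tilde v)$ of the periodic functional $I_{\lambda,p}$ with $I_{\lambda,p}(\tilde u,\tilde v)\le c$. The paper then observes that $c\le \max_{t\ge 0}I_\lambda(t\tilde u,t\tilde v)\le \max_{t\ge 0}I_{\lambda,p}(t\tilde u,t\tilde v)=I_{\lambda,p}(\tilde u,\tilde v)\le c$, so equality holds throughout and the mountain pass level is attained along an explicit path $\gamma_0$; it then invokes a \emph{local} Mountain Pass Theorem (Theorem~\ref{elves}) to produce a nontrivial critical point of $I_\lambda$ on $\gamma_0$ at level $c$. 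You instead close by contradiction via the strict chain $c\le c_{\mathcal N_\lambda}<c_{\mathcal N_{\lambda,p}}\le I_{\lambda,p}(\tilde u,\tilde v)\le c$, i.e.\ you are using Lemma~\ref{nehari}(ii), which the paper proves but uses only for Theorem~\ref{C}. Your route is more elementary in that it avoids the abstract local Mountain Pass tool, at the price of needing a periodic ground state $(u_p,v_p)$; note that Theorem~\ref{A} alone yields only a bound state, so what you are really invoking is the argument inside the proof of Lemma~\ref{nehari}(ii). Two small corrections: for the strict comparison $I_\lambda(tu_p,tv_p)<I_{\lambda,p}(tu_p,tv_p)$ it suffices that $(u_p,v_p)$ be nonnegative and nontrivial (which the Cerami-sequence argument gives), strict positivity is not needed and indeed is not available unless $\lambda_p>0$; and your positivity step via $(|u_0|,|v_0|)$ does not literally remain on $\mathcal N_\lambda$---you must project back via Lemma~\ref{proj} and argue the maximum along the ray does not increase, whereas the paper more directly tests the equation with $(-u_0^-,-v_0^-)$.
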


Finally, we study the behavior of the ground state solutions of System~\eqref{paper1jap} when the coupling function goes to zero. In fact, we prove that the sequence of solutions goes to a positive ground state solution of the uncoupled Schr\"{o}dinger equation. Precisely, we obtain the following result:

\begin{theorem}\label{C}
	Suppose \ref{v1}-\ref{v4}, \ref{f1}-\ref{f4}. Let $(\lambda_{n})_{n} \subset L^{\infty}(\mathbb{R}^{N})$ be a sequence of positive functions such that $\|\lambda_{n}\|_{\infty}\rightarrow0$ as $n\rightarrow+\infty$. For each $n\in\mathbb{N}$, let $(u_{\lambda_{n}},v_{\lambda_n})_{n}$ be a positive ground state solution for System~\eqref{paper1jap} with $\lambda=\lambda_{n}$. Then, up to a subsequence, $(u_{\lambda_{n}},v_{\lambda_n})\rightharpoonup (U_{0},V_{0})$ as $n\rightarrow+\infty$ with one of the following conclusions holding:
	 \begin{itemize}
	 	\item[(i)] $V_{0}\equiv0$ and $U_{0}$ is a positive ground state of
	 	 \[
	 	  (-\Delta)^{s_{1}}u+V_{1}(x)u=f_{1}(u), \quad x\in\mathbb{R}^{N}.
	 	 \]
	 \end{itemize}
     \begin{itemize}
 		\item[(ii)] $U_{0}\equiv0$ and $V_{0}$ is a positive ground state of
 		\[
 		(-\Delta)^{s_{2}}v+V_{2}(x)v=f_{2}(v), \quad x\in\mathbb{R}^{N}.
 		\]
 	 \end{itemize}
\end{theorem}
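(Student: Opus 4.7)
The plan is to combine an energy comparison with a concentration-compactness analysis of the translates of $(u_n,v_n):=(u_{\lambda_n},v_{\lambda_n})$, resting throughout on the identity
\[
c_{\lambda_n}=J_{\lambda_n}(u_n,v_n)-\tfrac{1}{2}\bigl\langle J_{\lambda_n}'(u_n,v_n),(u_n,v_n)\bigr\rangle=\int_{\mathbb{R}^N}G_1(u_n)+\int_{\mathbb{R}^N}G_2(v_n),
\]
where $c_{\lambda_n}:=J_{\lambda_n}(u_n,v_n)$ and $G_i(t):=\tfrac{1}{2}f_i(t)t-F_i(t)\geq 0$ by \ref{f3}. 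First I would establish a uniform upper bound: the single-equation analogue of Theorem~\ref{B} (as in \cite{MR2532816}) yields positive ground states $w_1,w_2$ of the decoupled scalar problems with energies $c_1^a,c_2^a$, and the pairs $(w_1,0)$ and $(0,w_2)$ lie on $\mathcal{N}_{\lambda_n}$ for every $n$ because the coupling term vanishes on them, so $c_{\lambda_n}\leq\min\{c_1^a,c_2^a\}$. Plugging this bound and $J_{\lambda_n}'(u_n,v_n)=0$ into the standard boundedness argument that uses \ref{f3} together with the uniform constant $\delta<1$ in \ref{A3}, I obtain a uniform estimate $\|(u_n,v_n)\|_E\leq C$, so, up to a subsequence, $u_n\rightharpoonup U_0$ in $H^{s_1}(\mathbb{R}^N)$, $v_n\rightharpoonup V_0$ in $H^{s_2}(\mathbb{R}^N)$, and both converge a.e.

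Because $\|\lambda_n\|_\infty\to 0$ and $(u_n,v_n)$ is $L^2$-bounded, the coupling terms disappear in the limit when one tests against any $(\varphi,\psi)\in C_c^\infty(\mathbb{R}^N)^2$; combined with the subcritical growth \ref{f2} and the local compactness of the fractional Sobolev embeddings, this lets me pass to the limit in $J_{\lambda_n}'(u_n,v_n)=0$ and show that $(U_0,V_0)$ weakly solves the decoupled system
\[
(-\Delta)^{s_1}U_0+V_1(x)U_0=f_1(U_0),\qquad(-\Delta)^{s_2}V_0+V_2(x)V_0=f_2(V_0).
\]
Hence each nontrivial component lies on the corresponding Nehari manifold $\mathcal{N}_i^a$ of the scalar decoupled problem. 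If both $U_0$ and $V_0$ were nontrivial, the minimality of $c_i^a$ together with Fatou's lemma in the displayed identity would yield $\liminf c_{\lambda_n}\geq J_1^a(U_0)+J_2^a(V_0)\geq c_1^a+c_2^a$, contradicting $c_{\lambda_n}\leq\min\{c_1^a,c_2^a\}$ since both $c_i^a>0$. Thus at most one of $U_0,V_0$ is nontrivial.

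The main obstacle is to rule out $(U_0,V_0)=(0,0)$. I argue by contradiction using the fractional Lions lemma~\ref{lions}. If $\sup_{y\in\mathbb{R}^N}\int_{B(y,R)}(u_n^2+v_n^2)\to 0$, then $u_n,v_n\to 0$ strongly in every $L^q$ with $q\in(2,2_{s_i}^*)$; the pointwise estimate $G_i(t)\leq\varepsilon t^2+C_\varepsilon t^{p_i}$ from \ref{f1}--\ref{f2} then forces $c_{\lambda_n}\to 0$, contradicting the uniform mountain-pass lower bound $c_{\lambda_n}\geq\rho>0$ (which depends only on $\delta$ from \ref{A3}). Otherwise there exist $(y_n)\subset\mathbb{R}^N$ with $|y_n|\to\infty$ (the weak limit being zero) such that $\tilde u_n:=u_n(\cdot+y_n)$, $\tilde v_n:=v_n(\cdot+y_n)$ converge weakly to some $(\tilde U,\tilde V)\neq(0,0)$. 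Writing $y_n=k_n+r_n$ with $k_n\in\mathbb{Z}^N$ and $r_n\to r\in[0,1]^N$, the asymptotic periodicity conditions \ref{v3}--\ref{v4} give $V_i(\cdot+y_n)\to V_{i,p}(\cdot+r)$ a.e.\ and $\|\lambda_n(\cdot+y_n)\|_\infty\to 0$, so $(\tilde U,\tilde V)$ solves the decoupled \emph{periodic} limit system; Fatou applied to the translation-invariant identity yields $\liminf c_{\lambda_n}\geq c_{i,p}$ for some $i\in\{1,2\}$, where $c_{i,p}$ denotes the scalar periodic ground state energy. But the strict inequality $c_i^a<c_{i,p}$ (produced by $V_i<V_{i,p}$ in \ref{v3}, exactly as in the proof of Theorem~\ref{B}) forces $c_{\lambda_n}\leq\min\{c_1^a,c_2^a\}<c_{i,p}$, a contradiction. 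Combining this with the previous paragraph, exactly one component of $(U_0,V_0)$ is trivial; if, say, $V_0\equiv 0$ and $U_0\neq 0$, Fatou applied once more to the energy identity together with $U_0\in\mathcal{N}_1^a$ gives $c_1^a\leq J_1^a(U_0)=\int G_1(U_0)\leq\liminf c_{\lambda_n}\leq c_1^a$, so $U_0$ is a ground state of the first uncoupled equation, and its positivity follows from the a.e.\ limit of the positive $u_n$ and the strong maximum principle for $(-\Delta)^{s_1}$.
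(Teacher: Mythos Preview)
Your argument is correct and takes a genuinely different route from the paper. The paper first invokes Proposition~\ref{conve} to get the \emph{convergence} $c_{\lambda_n}\to c_{0}$ (the ground-state level of the decoupled system), then rules out the zero weak limit by contradicting the system-level strict inequality $c_{\mathcal{N}_{\lambda_n}}<c_{\mathcal{N}_{\lambda_n,p}}$ of Lemma~\ref{nehari}(ii), and finally reaches the dichotomy through Lemma~\ref{sinal} (any ground state of the $\lambda=0$ system has one trivial component). You bypass all three devices: in place of convergence of levels you use only the elementary one-sided bound $c_{\lambda_n}\le\min\{c_1^a,c_2^a\}$, obtained by testing with $(w_1,0)$ and $(0,w_2)$; in the non-vanishing case you compare against \emph{scalar} periodic levels via $c_i^a<c_{i,p}$ rather than against the coupled periodic level; and you obtain the dichotomy directly from energy additivity ($c_1^a+c_2^a>\min\{c_1^a,c_2^a\}$) instead of appealing to Lemma~\ref{sinal}. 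Your approach is more self-contained, since it needs only the scalar analogue of Theorem~\ref{B} and not Proposition~\ref{conve} or Lemma~\ref{nehari}(ii); the paper's route, on the other hand, yields as a byproduct the stronger statement $c_{\lambda_n}\to c_0=\min\{c_1^a,c_2^a\}$, which you recover only a posteriori from the chain $c_i^a\le J_i^a(\cdot)\le\liminf c_{\lambda_n}\le\min\{c_1^a,c_2^a\}$. One small point: the passage ``$V_i(\cdot+y_n)\to V_{i,p}(\cdot+r)$ a.e.'' is stated a bit loosely; what one actually uses (and what the paper proves in Lemma~\ref{conv}) is the $L^1$-vanishing of $(V_{i,p}-V_i)\tilde u_n\varphi$ for $\varphi\in C_c^\infty$, which is exactly what the $\mathcal{F}$-condition delivers.
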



\begin{remark}\label{r1}
	A typical example of nonlinearity satisfying \ref{f1}-\ref{f4} is given by $f(t) = t ln(1 + |t|)$ for $t>0$ and $f(t)=0$ for $t\leq0$. More generally, we can consider also
$f_{i}(t) = t ln^{\gamma_{i}}(1 + |t|)$ for $t>0$ with $\gamma_{i} \geq 1$ and $f_{i}(t)=0$ for $t\leq0$, $i = 1,2$. In these examples the functions satisfy assumptions \ref{f1}-\ref{f4}. However, these functions do not verify the Ambrosetti-Rabinowtiz condition. In fact, if Ambrosetti-Rabinowitz condition \eqref{ar} is satisfied then we have $F_{i}(t) \geq c_{1} |t|^{\theta_{i}}, t >0$ for each $i = 1,2$ and for some $c_{1} > 0$. In particular, we obtain that $\displaystyle \lim_{t\rightarrow +\infty} F_{i}(t)/|t|^{\theta_{i}} > 0$ holds true. A simple calculation shows that these examples satisfy $\displaystyle \lim_{t\rightarrow +\infty} F_{i}(t)/|t|^{\theta_{i}} = 0$ for any $\theta_{i} > 2$, which implies that \eqref{ar} does not work.
\end{remark}



\subsection{Outline}
The remainder of this paper is organized as follows: In the forthcoming Section, we recall some preliminary concepts about the fractional Laplace operator and we introduce the variational framework to the coupled systems \eqref{j0} and \eqref{paper1jap}. Section \ref{s2} is devoted to the mountain pass geometry to the elliptic system \eqref{j0}. In Section \ref{proofA} we give the proof of Theorem \ref{A}. In order to obtain ground states, in Section \ref{neharimanifold} we introduce and give some properties of Nehari manifold. Moreover, we study the behavior of the ground state energy level. Finally, Sections \ref{proofB} and \ref{proofC} are devoted to the proof of Theorems \ref{B} and \ref{C} respectively.


\subsection{Notation}
Let us introduce the following notation:	
\begin{itemize}
	\item $C$, $\tilde{C}$, $C_{1}$, $C_{2}$,... denote positive constants (possibly different).
	\item $o_{n}(1)$ denotes a sequence which converges to $0$ as $n\rightarrow\infty$;
    \item The norm in $L^{q}(\mathbb{R}^{N})$ and $L^{\infty}(\mathbb{R}^{N})$, will be denoted respectively by $\|\cdot\|_{q}$ and $\|\cdot\|_{\infty}$.
    \item The norm in $L^{q}(\mathbb{R}^{N})\times L^{q}(\mathbb{R}^{N})$ is given by $\|(u,v)\|_{q}=\left(\|u\|^{q}_{q}+\|v\|^{q}_{q}\right)^{1/q}$.

\end{itemize}


\section{Preliminaries and variational framework}\label{s0}

In order to give a variational approach to our problems, we start this section recalling some preliminary concepts about the fractional Laplace operator, for a more complete discussion we refer the readers to \cite{guia}. For $s\in(0,1)$, the \textit{fractional Laplace operator} of a measurable function $u:\mathbb{R}^{N}\rightarrow\mathbb{R}$ is defined by
$$
(-\Delta)^{s}u(x)=-\frac{1}{2}C(N,s)\int_{\mathbb{R}^{N}}\frac{u(x+y)+u(x-y)-2u(x)}{|y|^{1+2s}}\,\ud y,
$$
where
$$
C(N,s)=\left(\int_{\mathbb{R}^{N}}\frac{1-\cos(\xi_{1})}{|\xi|^{N+2s}}\,\ud \xi\right)^{-1}, \quad \xi=(\xi_{1},...,\xi_{N}).
$$
We recall the definition of the fractional Sobolev space
$$
H^{s}(\mathbb{R}^{N})=\left\{u\in L^{2}(\mathbb{R}^{N}):[u]_{s}<\infty\right\},
$$
endowed with the natural norm
$$
\|u\|_{s}=\left([u]_{s}^{2}+\int_{\mathbb{R}^{N}}u^{2}\,\ud x\right)^{1/2}, \quad
[u]_{s}=\left(\int_{\mathbb{R}^{N}}\int_{\mathbb{R}^{N}}\frac{|u(x)-u(y)|^{2}}{|x-y|^{N+2s}}\,\ud x\ud y\right)^{1/2}
$$
where the term $[u]_{s}$ is the so-called \textit{Gagliardo semi-norm} of the function $u$. In light of \cite[Proposition 3.6]{guia} we have that
$$
2C(N,s)^{-1}\|(-\Delta)^{s/2}u\|_{2}^{2}=\int_{\mathbb{R}^{N}}\int_{\mathbb{R}^{N}}\frac{|u(x)-u(y)|^{2}}{|x-y|^{N+2s}}\,\ud x\ud y, \quad \mbox{for all} \hspace{0,2cm} u\in H^{s}(\mathbb{R}^{N}).
$$
For the sake of simplicity, throughout the paper we omit the normalization constants. In view of the presence of the periodic potentials $V_{1,p}$ and $V_{2,p}$ in System~\eqref{j0}, we denote by $E_{i,p}$ the Sobolev space $H^{s_{i}}(\mathbb{R}^{N})$ endowed with the inner product
$$
(u,v)_{E_{i,p}}=\int_{\mathbb{R}^{N}}(-\Delta)^{\frac{s_{i}}{2}} u(-\Delta)^{\frac{s_{i}}{2}} v\,\ud x+\int_{\mathbb{R}^{N}}V_{i,p}(x)uv\,\ud x,
$$
to which corresponds the induced norm $\|u\|_{E_{i,p}}^{2}=(u,u)_{E_{i,p}}$. The fractional critical Sobolev exponent is given by $2^{*}_{s}=2N/(N-2s)$. In light of \cite[Theorem~6.7]{guia} we recall that $E_{i,p}$ is continuously embedded into $L^{q}(\mathbb{R}^{N})$, for $q\in[2,2^{*}_{s_{i}}]$. Here we set the product space $E_{p}=E_{1,p}\times E_{2,p}$ which is a Hilbert space endowed with the natural inner product
$$
((u,v),(w,z))_{E_{p}}=\int_{\mathbb{R}^{N}}\left((-\Delta)^{\frac{s_{1}}{2}} u(-\Delta)^{\frac{s_{1}}{2}} w+V_{1,p}(x)uw+(-\Delta)^{\frac{s_{2}}{2}} v(-\Delta)^{\frac{s_{2}}{2}} z +V_{2,p}(x)vz\right)\,\mathrm{d} x.
$$
We consider the induced norm $\|(u,v)\|_{E_{p}}^{2}=((u,v),(u,v))_{E_{p}}$. Associated to System~\eqref{j0} we have the energy functional $I_{\lambda,p}:E_{p}\rightarrow\mathbb{R}$ given by
$$
I_{\lambda,p}(u,v)=\frac{1}{2}\left(\|(u,v)\|_{E_{p}}^{2}-2\int_{\mathbb{R}^{N}}\lambda_{p}(x)uv\,\mathrm{d} x\right)-\int_{\mathbb{R}^{N}}\left(F_{1}(u)+F_{2}(v)\right)\,\mathrm{d} x.
$$
It follows from assumptions \ref{f1} and \ref{f2} that for any $\varepsilon>0$ there exists $C_{\varepsilon}>0$ such that
\begin{equation}\label{ej1}
F_{i}(t)\leq \varepsilon |t|^{2}+C_{\varepsilon}|t|^{p_{i}}, \quad \mbox{for all} \hspace{0,2cm} t\in\mathbb{R} \hspace{0,2cm} \mbox{and} \hspace{0,2cm} i=1,2.
\end{equation}
Therefore, $I_{\lambda,p}$ is well defined functional on $E_{p}$. Furthermore, we check that $I_{\lambda,p} \in C^1(E_{p},\mathbb{R})$ and
$$
\langle I_{\lambda,p}'(u,v),(\phi,\psi)\rangle  =  ((u,v),(\phi,\psi))_{E_{p}}
-\int_{\mathbb{R}^{N}}\left(f_{1}(u)\phi+f_{2}(v)\psi\right)\,\mathrm{d} x
-\int_{\mathbb{R}^{N}}\lambda_{p}(x)\left(u\psi+v\phi\right)\,\mathrm{d} x.
$$
Hence, critical points of $I_{\lambda,p}$ correspond to weak solutions of System~\eqref{j0} and conversely.

Now we shall consider the elliptic system \eqref{paper1jap}. Taking into account the presence of the bounded potentials $V_{1}$ and $V_{2}$ we denote by $E_{i}$ the Sobolev space $H^{s_{i}}(\mathbb{R}^{N})$ endowed with the inner product
$$
(u,v)_{E_{i}}=\int_{\mathbb{R}^{N}}(-\Delta)^{\frac{s_{i}}{2}} u(-\Delta)^{\frac{s_{i}}{2}} v\,\ud x+\int_{\mathbb{R}^{N}}V_{i}(x)uv\,\ud x,
$$
to which corresponds the induced norm $\|u\|_{E_{i}}^{2}=(u,u)_{E_{i}}$. By the same reason of periodic case, we have that $E_{i}$ is continuously embedded into $L^{q}(\mathbb{R}^{N})$, for $q\in[2,2^{*}_{s_{i}}]$. Here we set the product space $E=E_{1}\times E_{2}$ which is a Hilbert space endowed with the natural inner product
$$
((u,v),(w,z))_{E}=\int_{\mathbb{R}^{N}}\left((-\Delta)^{\frac{s_{1}}{2}} u(-\Delta)^{\frac{s_{1}}{2}} w+V_{1}(x)uw+(-\Delta)^{\frac{s_{2}}{2}} v(-\Delta)^{\frac{s_{2}}{2}} z+V_{2}(x)vz\right)\,\mathrm{d} x.
$$
Moreover, we also consider the induced norm $\|(u,v)\|_{E}^{2}=((u,v),(u,v))_{E}$.

Associated to System~\eqref{paper1jap} we have the energy functional $I_{\lambda}:E \rightarrow\mathbb{R}$ given by
\begin{equation}\label{I}
I_{\lambda}(u,v)=\frac{1}{2}\left(\|(u,v)\|_{E}^{2}-2\int_{\mathbb{R}^{N}}\lambda (x)uv\,\mathrm{d} x\right)-\int_{\mathbb{R}^{N}}\left(F_{1}(u)+F_{2}(v)\right)\,\mathrm{d} x.
\end{equation}
By similar arguments it can be checked that $I_{\lambda} \in C^1(E,\mathbb{R})$ and
$$
\langle I_{\lambda}'(u,v),(\phi,\psi)\rangle  =  ((u,v),(\phi,\psi))_{E}
-\int_{\mathbb{R}^{N}}\left(f_{1}(u)\phi+f_{2}(v)\psi\right)\,\mathrm{d} x
-\int_{\mathbb{R}^{N}}\lambda (x)\left(u\psi+v\phi\right)\,\mathrm{d} x.
$$
Hence, critical points of $I_{\lambda}$ correspond to weak solutions of System~\eqref{paper1jap} and conversely.



\section{Mountain pass geometry}\label{s2}

In this section we give the mountain pass geometry to the energy functional associated to System \eqref{j0}. The same ideas discussed in this section can be applied for the elliptic system \eqref{paper1jap}. It is important to mention that some kind of compactness is required in variational methods. Let X be a Banach space and $I : X \rightarrow \mathbb{R}$ a functional of $C^{1}$ class. It is important to recall that a sequence $(u_{n})_{n} \subset X$ is said to be a Palais-Smale sequence at the level $c \in \mathbb{R}$, whenever $I_{\lambda}(u_{n}) \rightarrow c$ and $\|I^{\prime}(u_{n})\|_{X} \rightarrow 0$ as $n \rightarrow \infty$. Recall also that a sequence $(u_{n})_{n} \subset X$ is said to be a Cerami sequence at the level $c \in \mathbb{R}$, in short $(Ce)_{c}$ sequence, whenever $I_{\lambda}(u_{n}) \rightarrow c$ and $(1 + \|u_{n}\|_{X}) \|I^{\prime}(u_{n})\|_{X^{*}} \rightarrow 0$ as $n \rightarrow \infty$. Since the Ambrosetti-Rabinowitz condition \eqref{ar} it is not available in our setting, we are not able to consider the Palais-Smale condition. In fact, under this condition, we can not verify that any Palais-Smale is bounded. However, by considering the nonquadraticity assumption \ref{f3}, we are able to ensure that any Cerami sequence is bounded. For this purpose, in order to get a nontrivial solution for the fractional coupled systems \eqref{j0} and \eqref{paper1jap}, we shall make use of the following variant of the Mountain Pass Theorem (see \cite{MR1149010}) where it is considered the Cerami condition instead of the Palais-Smale condition.

\begin{theoremletter}\label{MP}
	Let $X$ be a real Banach space with its dual space $X^{*}$, and $J\in C^{1}(X,\mathbb{R})$ be such that
	  \begin{itemize}
	  	\item[($I_{1}$)] there exists $\tau>0$ and $\varrho>0$ such that $J(u)\geq\tau$ provided $\|u\|_{X}=\varrho$;
	  	\item[($I_{2}$)] there exists $e\in X$ with $\|e\|_{X}>\varrho$ such that $J(e)<0$.
	  \end{itemize}
	 Define
	 $$
	  c:=\inf_{\gamma\in\Gamma}\max_{t\in[0,1]}J(\gamma(t)),
	 $$
	where
	 \begin{equation}\label{path}
		 \Gamma=\{\gamma\in C([0,1],X):\gamma(0)=0 \ \mbox{and} \ \gamma(1)=e\}.
     \end{equation}
    Then, there exists a sequence $(u_{n})_{n}\subset X$ such that
	 $$
	  J(u_{n})\rightarrow c \quad \mbox{and} \quad (1+\|u_{n}\|_{X})\|J'(u_{n})\|_{X^{*}}\rightarrow0.
	 $$
\end{theoremletter}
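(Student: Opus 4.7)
The plan is to derive this Cerami-type mountain pass theorem via a quantitative deformation argument that parallels the classical Palais--Smale version but employs a pseudo-gradient vector field rescaled by the weight $(1+\|u\|_X)$.

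First, I would argue by contradiction: assume that no sequence $(u_n)\subset X$ with $J(u_n)\to c$ and $(1+\|u_n\|_X)\|J'(u_n)\|_{X^*}\to 0$ exists. Then one can find $\epsilon_0,\delta_0>0$ such that
\[
(1+\|u\|_X)\|J'(u)\|_{X^*}\geq \delta_0 \quad\text{whenever } |J(u)-c|\leq 2\epsilon_0.
\]
From $(I_1)$, any continuous path in $\Gamma$ must cross the sphere $\{\|u\|_X=\varrho\}$, where $J\geq \tau$, so $c\geq \tau>0$. Since $J(0)=0$ and $J(e)<0$ (the zero value at the origin holds for the functionals in play), I may further shrink $\epsilon_0$ so that $\max\{J(0),J(e)\}<c-2\epsilon_0$.

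Next I would construct a locally Lipschitz pseudo-gradient. Starting from a standard pseudo-gradient $W$ on $\tilde X:=\{u: J'(u)\neq 0\}$ with $\|W(u)\|_X\leq 2$ and $\langle J'(u),W(u)\rangle \geq \|J'(u)\|_{X^*}$, I rescale to $V(u):=(1+\|u\|_X)W(u)$, so that
\[
\|V(u)\|_X\leq 2(1+\|u\|_X), \qquad \langle J'(u),V(u)\rangle\geq (1+\|u\|_X)\|J'(u)\|_{X^*}\geq \delta_0
\]
on the strip $S:=J^{-1}([c-2\epsilon_0,c+2\epsilon_0])$. Choosing a locally Lipschitz cutoff $\chi:X\to[0,1]$ equal to $1$ on $J^{-1}([c-\epsilon_0,c+\epsilon_0])$ and vanishing off $S$, I consider the flow
\[
\dot \eta(t,u)=-\chi(\eta(t,u))\,V(\eta(t,u)),\qquad \eta(0,u)=u.
\]
Since $\|\dot\eta\|_X\leq 2(1+\|\eta\|_X)$, Gronwall's inequality gives $1+\|\eta(t,u)\|_X\leq (1+\|u\|_X)e^{2t}$, so the trajectories are defined globally in $t\geq 0$ --- this is precisely where the Cerami weight $1+\|u\|_X$ plays its essential role.

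Along any trajectory, $\frac{d}{dt}J(\eta)=-\chi(\eta)\langle J'(\eta),V(\eta)\rangle\leq 0$, and wherever $\chi=1$ the derivative is bounded above by $-\delta_0$. Picking $T:=4\epsilon_0/\delta_0$, a standard case analysis (either the trajectory accumulates time $\geq T/2$ in the active zone, driving $J$ below $c-\epsilon_0$, or $J$ already exited the strip from above by monotonicity) yields the deformation property
\[
u\in J^{-1}((-\infty,c+\epsilon_0])\ \Longrightarrow\ J(\eta(T,u))\leq c-\epsilon_0.
\]
I then pick $\gamma_0\in\Gamma$ with $\max_{t\in[0,1]} J(\gamma_0(t))<c+\epsilon_0$ and define $\tilde\gamma(t):=\eta(T,\gamma_0(t))$. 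The choice of $\epsilon_0$ ensures $\chi(0)=\chi(e)=0$, so the endpoints are fixed and $\tilde\gamma\in\Gamma$, while $\max_{t\in[0,1]} J(\tilde\gamma(t))\leq c-\epsilon_0$ contradicts the definition of $c$.

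The main technical obstacle is precisely the global existence of the flow for the rescaled vector field: the classical deformation lemma requires $\|V\|_X$ bounded, which fails here because $V$ grows like $\|u\|_X$. The Gronwall estimate is what rescues the argument by ruling out finite-time blow-up. An alternative route would apply Ekeland's variational principle directly to the continuous map $\gamma\mapsto \max_{t\in[0,1]} J(\gamma(t))$ on $(\Gamma,d)$ equipped with a weighted metric such as $d(\gamma_1,\gamma_2)=\max_{t\in[0,1]} \|\gamma_1(t)-\gamma_2(t)\|_X/(1+\max_{s\in[0,1]}\|\gamma_1(s)\|_X)$; the resulting quasi-minimizer provides a point along the path at which the weighted gradient $(1+\|u\|_X)\|J'(u)\|_{X^*}$ is small, delivering the Cerami sequence.
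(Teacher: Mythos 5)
Your argument is essentially correct, but note that the paper does not prove this statement at all: Theorem~\ref{MP} is quoted as a known result and attributed to Schechter \cite{MR1149010}, so there is no in-paper proof to compare against. Your self-contained route is the standard quantitative deformation proof of the Cerami-type mountain pass theorem, and it is in the same spirit as Schechter's original argument: negate the conclusion to get a uniform lower bound $\delta_0$ for $(1+\|u\|_X)\|J'(u)\|_{X^*}$ on a band around level $c$, rescale a normalized pseudo-gradient by the weight $1+\|u\|_X$, and use the Gronwall bound $1+\|\eta(t,u)\|_X\leq (1+\|u\|_X)e^{2t}$ to secure global existence of the flow --- this is indeed the one place where the Cerami weight matters, and you identify it correctly. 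The descent estimate, the choice of $T$, and the endpoint argument are all sound (minor wording aside: since $J$ is non-increasing along the flow, a trajectory can only leave the band through the bottom, which is what your case analysis actually uses). The one genuine caveat is the hypothesis you silently add: keeping the endpoints of the deformed path fixed requires $\max\{J(0),J(e)\}<c-2\epsilon_0$, and the theorem as stated does not assume $J(0)=0$ (unlike Theorem~\ref{elves}, which does). You flag this, and it is harmless here because $c\geq\tau>0$ while $I_{\lambda,p}(0,0)=0$, but strictly speaking the statement should carry that normalization. Your alternative sketch via Ekeland's principle on $(\Gamma,d)$ is a legitimate second route, though the weighted metric you propose would need more care to yield the weighted gradient bound at the selected point; the deformation argument is the cleaner path.
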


The following Lemma is a consequence of assumption \ref{A3} and will be useful to overcome the difficulty imposed by the coupling function when we study the geometry of the energy functional.

\begin{lemma}\label{nehari-1}
	If \ref{A3} holds, then
	\begin{equation}\label{ej27}
	\|(u,v)\|_{E_{p}}^{2}-2\int_{\mathbb{R}^{N}}\lambda_{p}(x)uv\,\mathrm{d} x\geq(1-\delta)\|(u,v)\|_{E_{p}}^{2}, \quad \mbox{for all} \hspace{0,2cm} (u,v)\in E_{p}.
	\end{equation}
\end{lemma}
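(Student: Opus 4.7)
The plan is to control the cross term $2\int \lambda_p(x) uv\,dx$ pointwise by the diagonal potential terms already present in $\|(u,v)\|_{E_p}^2$, using the hypothesis \ref{A3} together with the elementary inequality $2ab \leq a^2 + b^2$.

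First I would apply the pointwise bound from \ref{A3} followed by Young's inequality: for every $x \in \mathbb{R}^N$,
\[
2|\lambda_p(x)\, u(x) v(x)| \leq 2\delta \sqrt{V_{1,p}(x) V_{2,p}(x)}\, |u(x)|\,|v(x)| \leq \delta\bigl(V_{1,p}(x) u(x)^2 + V_{2,p}(x) v(x)^2\bigr),
\]
where the last step uses $2\sqrt{ab}\,|u||v| \leq a u^2 + b v^2$ with $a = V_{1,p}(x)$ and $b = V_{2,p}(x)$ (both positive by \ref{v1}).

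Next I would integrate this bound over $\mathbb{R}^N$ and observe that, by the definition of the norms $\|\cdot\|_{E_{i,p}}$, the right-hand side is dominated by $\|(u,v)\|_{E_p}^2$. Namely,
\[
2\Bigl|\int_{\mathbb{R}^N} \lambda_p(x)\, uv\,dx\Bigr| \leq \delta \int_{\mathbb{R}^N}\!\bigl(V_{1,p}(x) u^2 + V_{2,p}(x) v^2\bigr)\,dx \leq \delta \|(u,v)\|_{E_p}^2,
\]
since the kinetic parts $\|(-\Delta)^{s_i/2}u\|_2^2$ are nonnegative. Combining this with the trivial identity $\|(u,v)\|_{E_p}^2 - 2\int \lambda_p uv \geq \|(u,v)\|_{E_p}^2 - 2|\int \lambda_p uv|$ yields the desired inequality \eqref{ej27}.

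There is no real obstacle here; the only point worth noting is that the constant $\delta \in (0,1)$ is preserved exactly because Young's inequality is used in its sharp form, which is what makes the resulting coercivity constant $1-\delta$ strictly positive. This positivity is precisely what will be needed later to show that $\|(u,v)\|_{E_p}^2 - 2\int \lambda_p uv\,dx$ defines an equivalent norm on $E_p$ and to verify the mountain pass geometry.
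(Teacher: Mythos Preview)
Your proof is correct and follows essentially the same approach as the paper: both use the pointwise bound from \ref{A3} together with the elementary inequality $2\sqrt{V_{1,p}}\,|u|\,\sqrt{V_{2,p}}\,|v|\leq V_{1,p}u^{2}+V_{2,p}v^{2}$ (the paper writes it as the expansion of $(\sqrt{V_{1,p}}|u|-\sqrt{V_{2,p}}|v|)^{2}\geq 0$, you call it Young's inequality), then integrate and drop the nonnegative kinetic terms to bound by $\delta\|(u,v)\|_{E_p}^{2}$.
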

\begin{proof}
	In fact, for all $(u,v)\in E_{p}$ we have
	$$
	0\leq\left(\sqrt{V_{1,p}(x)}|u|-\sqrt{V_{2,p}(x)}|v|\right)^{2}=V_{1,p}(x)u^{2}-2\sqrt{V_{1,p}(x)}|u|\sqrt{V_{2,p}(x)}|v|+V_{2,p}(x)v^{2}.
	$$
	Thus, by using assumption \ref{A3} we deduce that
	\[
		-2\int_{\mathbb{R}^{N}}\lambda_{p}(x)uv\,\ud x\geq -\delta\left(\int_{\mathbb{R}^{N}}V_{1,p}(x)u^{2}\,\ud x+\int_{\mathbb{R}^{N}}V_{2,p}(x)v^{2}\,\ud x\right)\geq  -\delta\|(u,v)\|_{E_{p}}^{2},
	\]
	which implies \eqref{ej27}.
\end{proof}	

In the next Lemma we check that $I_{\lambda,p}$ satisfies the mountain pass geometry introduced in Theorem~\ref{MP}.

\begin{lemma}\label{mpg}
	The energy functional $I_{\lambda,p}$ satisfies the mountain pass geometry $(I_{1})$ and $(I_{2})$.
	
\end{lemma}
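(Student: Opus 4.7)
The plan is to verify the two mountain pass conditions separately, exploiting Lemma~\ref{nehari-1} to handle the coupling term and the standard estimate \eqref{ej1} together with Sobolev embeddings for the nonlinear part.

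For condition $(I_1)$, I would first apply Lemma~\ref{nehari-1} to write
\[
I_{\lambda,p}(u,v)\geq\frac{1-\delta}{2}\|(u,v)\|_{E_{p}}^{2}-\int_{\mathbb{R}^{N}}\bigl(F_{1}(u)+F_{2}(v)\bigr)\,\mathrm{d} x.
\]
Using \eqref{ej1} with a small $\varepsilon>0$, and the continuous embeddings $E_{i,p}\hookrightarrow L^{2}(\mathbb{R}^{N})$ and $E_{i,p}\hookrightarrow L^{p_{i}}(\mathbb{R}^{N})$ (valid since $p_{i}\in(2,2^{*}_{s_{i}})$), I get
\[
\int_{\mathbb{R}^{N}}F_{i}(w)\,\mathrm{d} x\leq C\varepsilon\,\|w\|_{E_{i,p}}^{2}+\tilde{C}\,\|w\|_{E_{i,p}}^{p_{i}},\qquad w\in E_{i,p}.
\]
Choosing $\varepsilon$ small enough so that $C\varepsilon<(1-\delta)/4$, I arrive at a lower bound of the form
\[
I_{\lambda,p}(u,v)\geq\frac{1-\delta}{4}\|(u,v)\|_{E_{p}}^{2}-C_{1}\|(u,v)\|_{E_{p}}^{p_{1}}-C_{2}\|(u,v)\|_{E_{p}}^{p_{2}}.
\]
Since $p_{1},p_{2}>2$, the right-hand side is strictly positive on a small sphere $\|(u,v)\|_{E_{p}}=\varrho$, giving the required $\tau>0$.

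For condition $(I_2)$, I would construct a suitable $e$ by testing the functional along a ray. The key input is \ref{f1}, which after integration yields $\lim_{t\to+\infty}F_{i}(t)/t^{2}=+\infty$. Fix any $u_{0}\in E_{1,p}$ with $u_{0}\geq 0$, $u_{0}\not\equiv 0$ (e.g.\ smooth and compactly supported), and consider the ray $t\mapsto(tu_{0},0)\in E_{p}$. Since $f_{2}(0)=F_{2}(0)=0$ and $v\equiv 0$ kills the coupling term, I get
\[
\frac{I_{\lambda,p}(tu_{0},0)}{t^{2}}=\frac{1}{2}\|u_{0}\|_{E_{1,p}}^{2}-\int_{\mathbb{R}^{N}}\frac{F_{1}(tu_{0})}{t^{2}}\,\mathrm{d} x.
\]
On $\{u_{0}>0\}$ (a set of positive measure) the integrand tends pointwise to $+\infty$ as $t\to+\infty$, so Fatou's lemma gives $\int F_{1}(tu_{0})/t^{2}\,\mathrm{d} x\to+\infty$, hence $I_{\lambda,p}(tu_{0},0)\to-\infty$. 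Taking $e=(t_{0}u_{0},0)$ for $t_{0}$ sufficiently large ensures $\|e\|_{E_{p}}>\varrho$ and $I_{\lambda,p}(e)<0$.

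Neither step presents a serious obstruction; the main conceptual point is simply that Lemma~\ref{nehari-1} absorbs the indefinite coupling term into the quadratic part in a uniformly coercive way, after which the verification reduces to the classical scalar argument. The only mild care is in the Fatou step of $(I_{2})$, where one must ensure the chosen $u_{0}$ has a nontrivial positivity set so that the superquadratic growth of $F_{1}$ dominates.
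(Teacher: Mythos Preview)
Your proof is correct and follows essentially the same approach as the paper: for $(I_{1})$ the argument is identical (Lemma~\ref{nehari-1} plus \eqref{ej1} plus Sobolev embeddings, then choose $\varepsilon$ and $\varrho$ small). For $(I_{2})$ there is a minor variation worth noting: the paper tests along the diagonal ray $(t\varphi,t\varphi)$ with a fixed positive $\varphi\in C^{\infty}(\mathbb{R}^{N})$, whereas you test along the one-component ray $(tu_{0},0)$; both choices work via the same Fatou argument because the quadratic part (including the coupling term) scales like $t^{2}$ while $\int F_{1}(tu_{0})/t^{2}\to+\infty$, and your choice has the small advantage that the coupling term vanishes identically so no estimate on it is even needed.
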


\begin{proof}	
	Using \eqref{ej1}, \eqref{ej27} and Sobolev embedding we can deduce that
	 \begin{alignat*}{2}
		 I_{\lambda,p}(u,v) & \geq  (1-\delta)\|(u,v)\|_{E_{p}}^{2}-\varepsilon(\|u\|_{2}^{2}+\|v\|_{2}^{2})-C_{\varepsilon}\|u\|_{p_{1}}^{p_{1}}-C_{\varepsilon}\|v\|_{p_{2}}^{p_{2}}\\
		        & \geq  \|(u,v)\|_{E_{p}}^{2}\left(1-\delta-C\varepsilon-C_{\varepsilon}\|(u,v)\|_{E_{p}}^{p_{1}-2}-C_{\varepsilon}\|(u,v)\|_{E_{p}}^{p_{2}-2}\right),
	 \end{alignat*}
	for all $(u,v)\in E_{p}$. Let $\varepsilon>0$ be fixed such that $1-\delta-C\varepsilon>0$. Hence, since $p_{1},p_{2}>2$ we may choose $\varrho>0$ sufficiently small such that $1-\delta-C\varepsilon-C_{\varepsilon}\varrho^{p_{1}-2}-C_{\varepsilon}\varrho^{p_{2}-2}>0$. Therefore, if $\|(u,v)\|_{E_{p}}=\varrho$ then $I_{\lambda,p}(u,v)\geq\tau$, where
	 $$
	  \tau:=\varrho^{2}\left(1-\delta-C\varepsilon-C_{\varepsilon}\varrho^{p_{1}-2}-C_{\varepsilon}\varrho^{p_{2}-2}\right)>0,
	 $$
	which finishes the proof of $(I_{1})$.
	
	In order to prove $(I_{2})$, notice from assumption \ref{f1} that
	 $$
	  \lim_{t\rightarrow+\infty}\frac{F_{i}(t)}{t^{2}}=+\infty, \quad \mbox{for each} \hspace{0,2cm} i=1,2.
	 $$
	Let $\varphi\in C^{\infty}(\mathbb{R}^{N})$, $\varphi>0$ be fixed. Thus, using Fatou's Lemma we have that
	 $$
	  \limsup_{t\rightarrow+\infty}\frac{I_{\lambda,p}(t\varphi,t\varphi)}{t^{2}} \leq \frac{1}{2}\left(\|(\varphi,\varphi)\|_{E_{p}}^{2}-2\int_{\mathbb{R}^{N}}\lambda_{p}(x)\varphi^{2}\,\ud x\right)-\int_{\mathbb{R}^{N}}\liminf_{t\rightarrow+\infty}\frac{F_{1}(t\varphi)+F_{2}(t\varphi)}{(t\varphi)^{2}}\varphi^{2}\,\ud x=-\infty.
	 $$
	Therefore, the result follows considering $(e_{1},e_{2})=(t\varphi,t\varphi)$ for $t$ sufficiently large.
\end{proof}

\begin{remark}
	We emphasize that all results of this section remain true for asymptotically periodic functions proving that $I_{\lambda}$ given in \eqref{I} has the mountain pass geometry.
\end{remark}


\section{Proof of Theorem~\ref{A}}\label{proofA}

As we checked in the preceding section (Lemma~\ref{mpg}), the energy functional $I_{\lambda,p}$ satisfies the mountain pass geometry. Therefore, in view of Theorem~\ref{MP} there exists a $(Ce)_{c}$ sequence $(u_{n},v_{n})_{n}\subset E_{p}$, that is,
 \begin{equation}\label{ps}
  I_{\lambda,p}(u_{n},v_{n})\rightarrow c \quad \mbox{and} \quad (1+\|(u_{n},v_{n})\|_{E_{p}})\|I_{\lambda,p}'(u_{n},v_{n})\|_{E_{p}^{*}}\rightarrow0,
 \end{equation}
where $c$ is the \textit{mountain pass level} introduced in Theorem~\ref{MP}. Notice that we can take a nonnegative Cerami sequence. In fact, let us denote $u_{n}=u_{n}^{+}-u_{n}^{-}$ and $v_{n}=v_{n}^{+}-v_{n}^{-}$, where $u_{n}^{+}:=\max\{u_{n},0\}$, $u_{n}^{-}:=\max\{-u_{n},0\}$, $v_{n}^{+}:=\max\{v_{n},0\}$ and $v_{n}^{-}:=\max\{-v_{n},0\}$. It follows from \ref{v4} that
\[
\int \lambda_{p}(x)(u_{n}v_{n}^{-}+v_{n}u_{n}^-)\,\mathrm{d}x\geq -2\int \lambda_{p}(x)u_{n}^-v_{n}^{-}\,\mathrm{d}x\geq -\delta\|(u_{n}^{-},v_{n}^{-})\|^{2}.
\]
Thus, since $f_{i}(s)=0$ for $s\leq0$ and $i=1,2$, by using \eqref{ps} we conclude that
\[
o_{n}(1)=I_{\lambda,p}^{\prime}(u_{n},v_{n})(-u_{n}^{-},-v_{n}^{-})=\|(u_{n}^{-},v_{n}^{-})\|^{2}+\int \lambda_{p}(x)(u_{n}v_{n}^{-}+v_{n}u_{n}^-)\,\mathrm{d}x\geq (1-\delta)\|(u_{n}^{-},v_{n}^{-})\|^{2},
\]
which implies that $(u_{n}^{-},v_{n}^{-})\rightarrow 0$ strongly $E_{p}$. Therefore, $(u_{n}^{+},v_{n}^{+})_{n}$ is a Cerami sequence. For the sake of simplicity we keep the notation $(u_{n},v_{n})_{n}$.

\begin{proposition}\label{p1}
The sequence $(u_{n},v_{n})_{n}$ given just above is bounded in $E_{p}$.
\end{proposition}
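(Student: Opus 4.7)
The strategy is to exploit the nonquadraticity condition \ref{f3} together with the Cerami property \eqref{ps} to extract an $L^\alpha$ bound for $(u_n)$ and $(v_n)$, and then argue by contradiction assuming $t_n := \|(u_n,v_n)\|_{E_p}\to\infty$. First I would subtract $\langle I'_{\lambda,p}(u_n,v_n), (u_n,v_n)\rangle$ from $2I_{\lambda,p}(u_n,v_n)$: the quadratic and coupling terms cancel, leaving
\[
2I_{\lambda,p}(u_n,v_n) - \langle I'_{\lambda,p}(u_n,v_n), (u_n,v_n)\rangle = \int_{\mathbb{R}^N} \bigl[f_1(u_n)u_n - 2F_1(u_n)\bigr] + \bigl[f_2(v_n)v_n - 2F_2(v_n)\bigr]\,\ud x.
\]
From \eqref{ps}, $I_{\lambda,p}(u_n,v_n)\to c$ and $|\langle I'_{\lambda,p}(u_n,v_n), (u_n,v_n)\rangle|\leq t_n\|I'_{\lambda,p}(u_n,v_n)\|_{E_p^\ast}\to 0$, so combining with \ref{f3} gives $a_2(\|u_n\|_\alpha^\alpha + \|v_n\|_\alpha^\alpha)\leq 2c + o_n(1)$; in particular $(u_n)_n$ and $(v_n)_n$ are bounded in $L^\alpha(\mathbb{R}^N)$.

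Assume now, for contradiction, that $t_n\to\infty$; normalize $w_n := u_n/t_n$ and $z_n := v_n/t_n$, so $\|(w_n,z_n)\|_{E_p}=1$ while $\|w_n\|_\alpha, \|z_n\|_\alpha\to 0$. I would then play two bounds on $\int_{\mathbb{R}^N}[F_1(u_n)+F_2(v_n)]\,\ud x$ against each other. Lemma~\ref{nehari-1} combined with $I_{\lambda,p}(u_n,v_n)=c+o_n(1)$ gives the lower bound
\[
\int_{\mathbb{R}^N} \bigl[F_1(u_n)+F_2(v_n)\bigr]\,\ud x \geq \frac{1-\delta}{2}\, t_n^2 - c - o_n(1),
\]
whereas \eqref{ej1} combined with $\|u_n\|_2^2 \leq V_p^{-1} t_n^2$ (from \ref{v1}) yields the upper bound
\[
\int_{\mathbb{R}^N} F_i(u_n)\,\ud x \leq \frac{\varepsilon}{V_p}\, t_n^2 + C_\varepsilon\,\|u_n\|_{p_i}^{p_i},\qquad i=1,2.
\]

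To finish I would show $\|u_n\|_{p_i}^{p_i} = o(t_n^2)$ by interpolating $\|u_n\|_{p_i}$ between the bounded $L^\alpha$-norm and either the Sobolev bound $\|u_n\|_{2^{\ast}_{s_i}} \leq C t_n$ or the $L^2$-bound $\|u_n\|_2 \leq V_p^{-1/2} t_n$, according to whether $p_i$ lies above or below $\alpha$. The quantitative bound $\alpha > \tfrac{N}{2}(p_0-2)$ imposed in \ref{f3} is tailored precisely so that the resulting power of $t_n$ comes out strictly less than $2$. Assembling the three displays above and dividing by $t_n^2$ produces
\[
\frac{c+o_n(1)}{t_n^2} \geq \frac{1-\delta}{2} - \frac{\varepsilon}{V_p} - o(1),
\]
and letting first $n\to\infty$ and then $\varepsilon\to 0^+$ gives $0 \geq (1-\delta)/2 > 0$, the desired contradiction; hence $(t_n)_n$ is bounded. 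The main obstacle is the interpolation step: the whole argument rests on balancing the $L^\alpha$ information from \ref{f3} against the Sobolev exponent $2^{\ast}_{s_i}$, and this is exactly where the quantitative bound on $\alpha$ enters.
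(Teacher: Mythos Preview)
Your proposal is correct and follows essentially the same route as the paper: obtain the $L^{\alpha}$ bound from \ref{f3} and the Cerami condition, then combine Lemma~\ref{nehari-1}, the growth estimate \eqref{ej1}, and interpolation of $L^{p_i}$ between $L^{\alpha}$ and $L^{2^{*}_{s_i}}$ so that the $\|\cdot\|_{p_i}^{p_i}$ terms are controlled by a sub-quadratic power of $\|(u_n,v_n)\|_{E_p}$. The only cosmetic difference is that the paper records the conclusion as a direct inequality $\|(u_n,v_n)\|_{E_p}^{2}\leq C\|(u_n,v_n)\|_{E_p}^{(1-t)p_1}+C\|(u_n,v_n)\|_{E_p}^{(1-t)p_2}+C$ with $(1-t)p_i<2$, rather than packaging it as a contradiction argument; your case split $p_i\lessgtr\alpha$ is in fact slightly more careful than the paper's ``without loss of generality $\alpha<p_i$''.
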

\begin{proof}
First of all, by using assumption \ref{f3} we have
 \begin{eqnarray*}
	 c+o_{n}(1) & = & I_{\lambda,p}(u_{n},v_{n})-\frac{1}{2}\langle I_{\lambda,p}'(u_{n},v_{n}),(u_{n},v_{n})\rangle\\
	            & = & \frac{1}{2}\int_{\mathbb{R}^{N}}(f_{1}(u_{n})u_{n}-2F_{1}(u_{n}))\,\ud x+\frac{1}{2}\int_{\mathbb{R}^{N}}(f_{2}(v_{n})v_{n}-2F_{2}(v_{n}))\,\ud x\\
	            & \geq & \frac{a_{2}}{2}\|(u_{n},v_{n})\|_{\alpha}^{\alpha},
 \end{eqnarray*}
which implies that $\|(u_{n},v_{n})\|_{\alpha}^{\alpha}\leq C$. Now, recall the following interpolation inequality
 $$
  \|u\|_{p}\leq \|u\|_{\alpha}^{t}\|u\|_{\beta}^{1-t}, \quad u\in L^{\alpha}(\mathbb{R}^{N})\cap L^{\beta}(\mathbb{R}^{N}),
 $$
where $0<\alpha\leq p\leq \beta$, $p^{-1}=t\alpha^{-1}+(1-t)\beta^{-1}$ and $t\in[0,1]$. Without any loss of generality we assume that $\alpha <  p_{i}$, for $i=1,2$. Hence, by choosing $\beta=2^{*}_{s_{i}}$ we get
 \begin{align}\label{ii}
  \|u_{n}\|_{p_{1}}^{p_{1}}\leq 2\|u_{n}\|_{\alpha}^{tp_{1}}\|u_{n}\|_{2^{*}_{s_{1}}}^{(1-t)p_{1}} \quad \mbox{and} \quad
  \|v_{n}\|_{p_{2}}^{p_{2}}\leq 2\|v_{n}\|_{\alpha}^{tp_{2}}\|v_{n}\|_{2^{*}_{s_{2}}}^{(1-t)p_{2}}.
 \end{align}
By using \eqref{ej27} one has
 $$
  \frac{1}{2}(1-\delta)\|(u_{n},v_{n})\|_{E_{p}}^{2}\leq \int_{\mathbb{R}^{N}}(F_{1}(u_{n})+F_{2}(v_{n}))\,\ud x+I_{\lambda,p}(u_{n},v_{n}),
 $$
which together with \eqref{ej1}, \eqref{ps} and Sobolev embedding implies that
 $$
  \frac{1}{2}(1-\delta)\|(u_{n},v_{n})\|_{E_{p}}^{2} \leq \varepsilon C\|(u_{n},v_{n})\|_{E_{p}}^{2}+C_{\varepsilon}(\|u_{n}\|_{p_{1}}^{p_{1}}+\|v_{n}\|_{p_{2}}^{p_{2}})+C.
 $$
Taking $\varepsilon>0$ small such that $1-\delta-\varepsilon C>0$ and using \eqref{ii} we deduce that
 \begin{equation}\label{ej2}
  \frac{1}{2}(1-\delta-\varepsilon C)\|(u_{n},v_{n})\|_{E_{p}}^{2}\leq \tilde{C_{\varepsilon}}\|(u_{n},v_{n})\|_{E_{p}}^{(1-t)p_{1}}+\tilde{C_{\varepsilon}}\|(u_{n},v_{n})\|_{E_{p}}^{(1-t)p_{2}}+C.
 \end{equation}
Since $\alpha>\frac{N}{2}(p_{0}-2)$ we conclude that $(1-t)p_{0}<2$. Therefore, \eqref{ej2} implies that $(u_{n},v_{n})_{n}$ is bounded in $E_{p}$. This ends the proof.
\end{proof}

According to Proposition \ref{p1}, we may assume, up to a subsequence, that
\begin{itemize}
	\item $(u_{n},v_{n})\rightharpoonup(u_{0},v_{0})$ weakly in $E_{p}$;
	\item $u_{n}\rightarrow u_{0}$ strongly in $L^{r}_{loc}(\mathbb{R}^{N})$, for all $2\leq r<2^{*}_{s_{1}}$;
	\item $v_{n}\rightarrow v_{0}$ strongly in $L^{s}_{loc}(\mathbb{R}^{N})$, for all $2\leq s<2^{*}_{s_{2}}$;
	\item $u_{n}(x)\rightarrow u_{0}(x)$ and $v_{n}(x)\rightarrow v_{0}(x)$, almost everywhere in $\mathbb{R}^{N}$.
\end{itemize}
Since $C^{\infty}_{0}(\mathbb{R}^{N})\times C^{\infty}_{0}(\mathbb{R}^{N})$ is dense into the space $E_{p}$, it follows by standard arguments that $I_{\lambda,p}'(u_{0},v_{0})=0$, that is, $(u_{0},v_{0})$ is a solution for System~\eqref{j0}.

The next result is important tool to overcome the lack of compactness. The \textit{vanishing lemma} was proved originally by P.L.~Lions \cite[Lemma~I.1]{MR778970} and here we use the following version to fractional Sobolev spaces (see \cite[Lemma~2.4]{secchi}).

\begin{lemma}\label{lions}
	Assume that $(u_{n})_{n}$ is a bounded sequence in $H^{s}(\mathbb{R}^{N})$ satisfying
	\begin{align}\label{paper4lionsa}
	\lim_{n\rightarrow+\infty}\sup_{y\in\mathbb{R}^{N}}\int_{B_{R}(y)}u_{n}^{2}\,\ud x=0,
	\end{align}
	for some $R>0$. Then, $u_{n}\rightarrow0$ strongly in $L^{r}(\mathbb{R}^{N})$, for $2<r<2^{*}_{s}$.
\end{lemma}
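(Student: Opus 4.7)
The plan is to adapt the strategy of Lions' classical vanishing lemma to the fractional setting by combining a finite-overlap covering of $\mathbb{R}^N$, the fractional Sobolev embedding on balls, and H\"older interpolation. First, I would prove that $u_n \to 0$ in $L^{q}(\mathbb{R}^N)$ for one explicit exponent $q \in (2, 2^*_s)$, and then extend to the full range $r \in (2, 2^*_s)$ by elementary interpolation.

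For the distinguished exponent, fix a countable cover $\{B_R(y_i)\}_{i \in \mathbb{N}}$ of $\mathbb{R}^N$ with the finite-overlap property (there exists $K = K(N)$ such that at most $K$ balls meet at any given point). By translation invariance, the fractional Sobolev embedding on a ball gives a constant $C_R$ independent of $y$ with
\[
\|u\|_{L^{2^{*}_{s}}(B_R(y))} \leq C_R \bigl( [u]_{s, B_R(y)} + \|u\|_{L^2(B_R(y))} \bigr).
\]
For any $q \in (2, 2^*_s)$ and the unique $\theta \in (0,1)$ with $\frac{1}{q} = \frac{\theta}{2} + \frac{1-\theta}{2^*_s}$, the interpolation inequality yields $\|u\|_{L^q(B_R(y))} \leq \|u\|_{L^2(B_R(y))}^{\theta} \|u\|_{L^{2^*_s}(B_R(y))}^{1-\theta}$. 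I would choose $q = 2 + 4s/N$, the precise exponent that makes $(1-\theta)q = 2$, so that raising to the $q$-th power and combining with the embedding gives
\[
\int_{B_R(y)} |u|^{q} \, \ud x \leq C \|u\|_{L^{2}(B_R(y))}^{\theta q} \bigl( [u]_{s, B_R(y)}^{2} + \|u\|_{L^{2}(B_R(y))}^{2} \bigr).
\]
Summing over $i$, the finite overlap gives $\sum_i \|u\|_{L^2(B_R(y_i))}^{2} \leq K \|u\|_{L^2}^{2}$, and the analogous bound $\sum_i [u]_{s, B_R(y_i)}^{2} \leq K' [u]_s^{2}$ holds because any pair $(x,y)$ with both endpoints inside a single ball of radius $R$ automatically satisfies $|x-y|<2R$, so only boundedly many indices contribute to such a pair. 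Pulling the $L^2$-on-ball factor out as a supremum leaves
\[
\|u\|_{L^q(\mathbb{R}^N)}^{q} \leq C \Bigl( \sup_{y \in \mathbb{R}^N} \|u\|_{L^{2}(B_R(y))}^{\theta q} \Bigr) \|u\|_{H^{s}(\mathbb{R}^N)}^{2}.
\]
Applied to $u_n$, the supremum vanishes by \eqref{paper4lionsa} while $\|u_n\|_{H^s}$ stays bounded, giving $u_n \to 0$ in $L^{q}(\mathbb{R}^N)$.

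To extend to an arbitrary $r \in (2, 2^*_s)$, I would interpolate in $L^p$-spaces: when $r \in (2, q]$, use $\|u_n\|_{r} \leq \|u_n\|_{2}^{\mu}\|u_n\|_{q}^{1-\mu}$ and the $L^2$-boundedness of $(u_n)$; when $r \in [q, 2^*_s)$, interpolate between $L^{q}$ and $L^{2^*_s}$, with $\|u_n\|_{2^*_s}$ controlled by the continuous embedding $H^{s}(\mathbb{R}^N) \hookrightarrow L^{2^*_s}(\mathbb{R}^N)$. The main technical subtlety is precisely the bound $\sum_i [u]_{s, B_R(y_i)}^{2} \leq K' [u]_s^{2}$: the fractional Gagliardo seminorm on a ball does not decompose additively into a global seminorm the way the classical Dirichlet integral does, so this requires the diameter observation $|x-y|<2R$ for contributing pairs. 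Everything else is a direct translation of the standard argument.
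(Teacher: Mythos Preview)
The paper does not prove this lemma; it merely records it and cites \cite[Lemma~2.4]{secchi} for the fractional version (and Lions for the classical one). Your argument is the standard adaptation of Lions' covering proof to the fractional setting and is correct as outlined: the finite-overlap cover, the local embedding $H^s(B_R)\hookrightarrow L^{2^*_s}(B_R)$, the choice $q=2+4s/N$ making $(1-\theta)q=2$ so that the right-hand side becomes summable, and the final $L^p$-interpolation to reach all $r\in(2,2^*_s)$ are exactly the ingredients used in Secchi's proof. You correctly identified the only genuinely nonlocal subtlety, namely that $\sum_i [u]_{s,B_R(y_i)}^2$ does not reassemble into $[u]_s^2$ by additivity; your observation that contributing pairs $(x,y)$ lie within distance $2R$ and are therefore counted at most $K$ times by the finite-overlap property is the right fix.
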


In order to get a nontrivial solution, we shall consider the following result:

\begin{proposition}\label{p2}
	Let $(u_{n},v_{n})_{n}\subset E_{p}$ be the $(Ce)_{c}$ sequence satisfying \eqref{ps}. Then, $(u_{n},v_{n})_{n}$ satisfies exactly one of the following conditions:
	 \begin{itemize}
	 	\item[(i)] $(u_{n},v_{n})\rightarrow (0,0)$ strongly in $E_{p}$;
	 	\item[(ii)] There exist a sequence $(y_{n})_{n}\subset\mathbb{R}^{N}$ and constants $R,\eta>0$ such that $|y_{n}|\rightarrow\infty$ as $n\rightarrow\infty$, and
	 	 \begin{equation}\label{vanish}
	 	  \liminf_{n\rightarrow+\infty}\int_{B_{R}(y_{n})}(u_{n}^{2}+v_{n}^{2})\,\ud x\geq\eta>0.
	 	 \end{equation}
	 \end{itemize}
\end{proposition}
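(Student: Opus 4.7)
The plan is a standard Lions concentration-compactness dichotomy applied to the $L^{2}$-density $u_n^2+v_n^2$. First, suppose the vanishing alternative
\[
\lim_{n\to\infty}\sup_{y\in\mathbb{R}^N}\int_{B_R(y)}(u_n^2+v_n^2)\,\mathrm{d}x=0
\]
for some $R>0$. Since $(u_n,v_n)_n$ is bounded in $E_p$ by Proposition~\ref{p1}, Lemma~\ref{lions} applied to each component yields $u_n\to 0$ in $L^{p_1}(\mathbb{R}^N)$ and $v_n\to 0$ in $L^{p_2}(\mathbb{R}^N)$. From \ref{f1} and \ref{f2} I would derive, for every $\varepsilon>0$, the pointwise bound $|f_i(t)t|\leq \varepsilon t^2+C_\varepsilon |t|^{p_i}$, so using the lower bound on the potentials from \ref{v1},
\[
\int_{\mathbb{R}^N}\big(f_1(u_n)u_n+f_2(v_n)v_n\big)\,\mathrm{d}x \leq C\varepsilon\|(u_n,v_n)\|_{E_p}^2+o_n(1).
\]
Testing $I_{\lambda,p}'(u_n,v_n)$ against $(u_n,v_n)$, using Lemma~\ref{nehari-1} on the quadratic form, and invoking the Cerami condition \eqref{ps}, I would obtain
\[
(1-\delta-C\varepsilon)\|(u_n,v_n)\|_{E_p}^2\leq o_n(1),
\]
so choosing $\varepsilon<(1-\delta)/C$ forces $\|(u_n,v_n)\|_{E_p}\to 0$, giving (i).

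In the opposite alternative, the vanishing condition fails, and standard extraction yields $R,\eta>0$, a subsequence, and centers $(y_n)\subset\mathbb{R}^N$ with $\int_{B_R(y_n)}(u_n^2+v_n^2)\,\mathrm{d}x\geq \eta$. It remains to check $|y_n|\to\infty$: if some subsequence stayed bounded, then (further subsequence) $y_n\to y_0$ and $B_R(y_n)\subset B_{R+1}(y_0)$ for large $n$, so the local $L^2$-compactness on $B_{R+1}(y_0)$ would produce a nontrivial weak limit $(u_0,v_0)\neq(0,0)$. In the setting where Proposition~\ref{p2} is invoked (the case of a nontrivial weak limit being handled separately, since such a limit is already a weak solution), we reduce to $(u_n,v_n)\rightharpoonup(0,0)$; under this reduction the above scenario contradicts $u_n,v_n\to 0$ in $L^2_{loc}(\mathbb{R}^N)$, forcing $|y_n|\to\infty$ and hence condition (ii).

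The principal obstacle is the absorption step in the vanishing alternative: Lions' lemma kills $L^{p_i}$-norms but leaves the $L^2$-contributions $\|u_n\|_2^2+\|v_n\|_2^2$, which sit inside $\|(u_n,v_n)\|_{E_p}^2$. One must absorb the term $\varepsilon(\|u_n\|_2^2+\|v_n\|_2^2)$ into the coercive lower bound $(1-\delta)\|(u_n,v_n)\|_{E_p}^2$ coming from Lemma~\ref{nehari-1}. This absorption is made possible precisely by the two structural hypotheses of the problem: the strict coupling smallness $\delta<1$ in \ref{A3}, and the superlinearity of $f_i$ at the origin in \ref{f1}, which allow $\varepsilon$ to be taken arbitrarily small.
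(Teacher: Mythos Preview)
Your argument is correct and follows essentially the same route as the paper: assume the Lions vanishing alternative, apply Lemma~\ref{lions} to kill the $L^{p_i}$-terms, use the growth bound $|f_i(t)t|\leq\varepsilon t^2+C_\varepsilon|t|^{p_i}$ from \ref{f1}--\ref{f2}, test $I_{\lambda,p}'(u_n,v_n)$ against $(u_n,v_n)$, and absorb via Lemma~\ref{nehari-1} to obtain $(1-\delta-C\varepsilon)\|(u_n,v_n)\|_{E_p}^2\leq o_n(1)$. Your treatment of the requirement $|y_n|\to\infty$ in alternative (ii) is in fact more careful than the paper's, which simply passes from ``(ii) fails'' to Lions vanishing without comment; you correctly identify that this step implicitly uses the reduction $(u_n,v_n)\rightharpoonup(0,0)$ under which the proposition is actually applied.
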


\begin{proof}
	Let us suppose that $(ii)$ does not hold. Thus, for any $R>0$ we have
	 $$
	  \lim_{n\rightarrow\infty}\sup_{y\in\mathbb{R}^{N}}\int_{B_{R}(y)}(u_{n}^{2}+v_{n}^{2})\,\mathrm{d} x=0.
	 $$
Hence, it follows from Lemma~\ref{lions} that $u_{n}\rightarrow0$ strongly in $L^{r}(\mathbb{R}^{N})$ for $r\in(2,2^{*}_{s_{1}})$ and $v_{n}\rightarrow0$ strongly in $L^{s}(\mathbb{R}^{N})$ for $s\in(2,2^{*}_{s_{2}})$. Hence, using growth conditions \ref{f1}, \ref{f2}, \eqref{ps} and Lemma~\ref{nehari-1}, we deduce that
 $$
  o_{n}(1)=\langle I_{\lambda,p}'(u_{n},v_{n}),(u_{n},v_{n})\rangle \geq (1-\delta-\varepsilon C)\|(u_{n},v_{n})\|_{E_{p}}^{2}+o_{n}(1).
 $$
Therefore, taking $\varepsilon>0$ small enough such that $1-\delta-\varepsilon C>0$ we conclude that $(i)$ holds. 	
\end{proof}

\begin{proof}[Proof of Theorem~\ref{A} completed]
		If $(u_{0},v_{0})\neq(0,0)$, then we already have a nontrivial solution for System~\eqref{j0}. If $(u_{0},v_{0})=(0,0)$, since $I_{\lambda,p}(u_{n},v_{n})\rightarrow c>0$ and $I_{\lambda,p}$ is continuous, it follows that $(u_{n},v_{n})_{n}$ can not go to zero strongly in $E_{p}$. Thus, from Proposition~\ref{p2}, we obtain a sequence $(y_{n})_{n}\subset\mathbb{R}^{N}$ and constants $R,\eta>0$ such that
		 \begin{equation}\label{ej4}
		  \liminf_{n\rightarrow+\infty}\int_{B_{R}(y_{n})}(u_{n}^{2}+v_{n}^{2})\,\ud x\geq\eta>0.
		 \end{equation}
		Let us consider the shift sequence $(\tilde{u}_{n}(x),\tilde{v}_{n}(x))=(u_{n}(x+y_{n}),v_{n}(x+y_{n}))$. Since $V_{1,p}(\cdot)$, $V_{2,p}(\cdot)$ and $\lambda_{p}(\cdot)$ are periodic, it follows that the energy functional $I_{\lambda,p}$ is invariant by translations of the form $(u,v)\mapsto (u(\cdot-z),v(\cdot-z))$ with $z\in\mathbb{Z}^{N}$. By a standard computation we can deduce that
		$$
		\|(u_{n},v_{n})\|_{E_{p}}=\|(\tilde{u}_{n},\tilde{v}_{n})\|_{E_{p}} \quad \mbox{and} \quad I_{\lambda,p}(u_{n},v_{n})=I_{\lambda,p}(\tilde{u}_{n},\tilde{v}_{n})\rightarrow c.
		$$
Furthermore, we also have
$$
(1+\|(\tilde{u}_{n},\tilde{v}_{n})\|_{E_{p}})\|I_{\lambda,p}'(\tilde{u}_{n},\tilde{v}_{n})\|_{E_{p}^{*}}\rightarrow0.
$$
Moreover, arguing as in the proof of Proposition~\ref{p1} we can conclude that $(\tilde{u}_{n},\tilde{v}_{n})_{n}$ is a bounded sequence in $E_{p}$. Thus, up to a subsequence, $(\tilde{u}_{n},\tilde{v}_{n})\rightharpoonup(\tilde{u},\tilde{v})$ weakly in $E_{p}$ and $(\tilde{u}_{n},\tilde{v}_{n})\rightarrow(\tilde{u},\tilde{v})$ strongly in $L^{2}(B_{R}(0))\times L^{2}(B_{R}(0))$. Moreover, $(\tilde{u},\tilde{v})$ is a critical point of $I_{\lambda,p}$. Using \eqref{ej4} we obtain
		$$
		\int_{B_{R}(0)}(\tilde{u}^{2}+\tilde{v}^{2})\,\ud x=\liminf_{n\rightarrow\infty}\int_{B_{R}(0)}(\tilde{u}_{n}^{2}+\tilde{v}_{n}^{2})\,\ud x=\liminf_{n\rightarrow\infty}\int_{B_{R}(y_{n})}(u_{n}^{2}+v_{n}^{2})\,\ud x\geq\eta>0.
		$$
Therefore, $(\tilde{u},\tilde{v})$ is a nontrivial weak solution for System~\eqref{j0}.

Finally, let us prove that if $\lambda_{p}(x)>0$ for all $x\in\mathbb{R}^{N}$, then the weak solution is positive. First, let us prove that $\tilde{u}\not\equiv0$ and $\tilde{v}\not\equiv0$. Suppose without loss of generality that $\tilde{u}\not\equiv0$. If $\tilde{v}\equiv0$, then
\[
0=\langle I_{\lambda,p}^{\prime}(\tilde{u},\tilde{v}),(0,\psi)\rangle=-\int_{\mathbb{R}^{N}}\lambda_{p}(x)\tilde{u}\psi\,\mathrm{d}x, \quad \mbox{for all} \hspace{0,2cm} \psi\in C^{\infty}_{0}(\mathbb{R}^{N}).
\]
Since $\lambda_{p}(x)>0$ for all $x\in \mathbb{R}^{N}$ we have that $\tilde{u}\equiv0$ which is a contradiction. Therefore, $\tilde{v}\not\equiv0$. Let us denote $\tilde{u}=\tilde{u}^{+}-\tilde{u}^{-}$ and $\tilde{v}=\tilde{v}^{+}-\tilde{v}^{-}$, where $\tilde{u}^{+}:=\max\{\tilde{u},0\}$, $\tilde{u}^{-}:=\max\{-\tilde{u},0\}$, $\tilde{v}^{+}:=\max\{\tilde{v},0\}$ and $\tilde{v}^{-}:=\max\{-\tilde{v},0\}$. It follows from \ref{v4} that
\[
\int_{\mathbb{R}^{N}} \lambda_{p}(x)(\tilde{u}\tilde{v}^{-}+\tilde{v}\tilde{u}^-)\,\mathrm{d}x\geq -2\int_{\mathbb{R}^{N}} \lambda_{p}(x)\tilde{u}^-\tilde{v}^{-}\,\mathrm{d}x\geq -\delta\|(\tilde{u}^{-},\tilde{v}^{-})\|^{2}.
\]
Thus, since $f_{i}(s)=0$ for $s\leq0$ and $i=1,2$, we have that
\[
0=I_{\lambda,p}^{\prime}(\tilde{u},\tilde{v})(-\tilde{u}^{-},-\tilde{v}^{-})=\|(\tilde{u}^{-},\tilde{v}^{-})\|^{2}+\int_{\mathbb{R}^{N}} \lambda_{p}(x)(\tilde{u}\tilde{v}^{-}+\tilde{v}\tilde{u}^-)\,\mathrm{d}x\geq (1-\delta)\|(\tilde{u}^{-},\tilde{v}^{-})\|^{2},
\]
which implies that $\|(\tilde{u}^{-},\tilde{v}^{-})\|^{2}= 0$. Therefore, $(\tilde{u}^{-},\tilde{v}^{-})=(0,0)$ and $(\tilde{u},\tilde{v})=(\tilde{u}^{+},\tilde{v}^{+})$ is a nonnegative solution for System~\eqref{j0}. By using Strong Maximum Principle in each equation of System~\eqref{j0}, we conclude that $\tilde{u}$ and $\tilde{v}$ are positive which finishes the proof of Theorem~\ref{A}.
\end{proof}



\section{The Nehari manifold}\label{neharimanifold}

In order to get a ground state solution, we introduce the Nehari manifolds associated to Systems~\eqref{j0} and \eqref{paper1jap} respectively defined by
 \[
 \mathcal{N}_{\lambda,p}:=\{(u,v)\in E_{p}\backslash\{(0,0)\}:\langle I_{\lambda,p}'(u,v),(u,v)\rangle=0\},
 \]
 \[
  \mathcal{N}_{\lambda}:=\{(u,v)\in E\backslash\{(0,0)\}:\langle I_{\lambda}'(u,v),(u,v)\rangle=0\}.
 \]
Since $f_{i}(t)=0$ for all $t\leq0$ and each $i=1,2$, it is not hard to check that if $(u,v)\in\mathcal{N}_{\lambda,p},\mathcal{N}_{\lambda}$, then $|\{u>0\}|>0$ or $|\{v>0\}|>0$. Let us define the set
 \[
  E_{+}=\{ (u,v)\in E \backslash \{(0,0)\}: |\{u>0\}|>0 \ or \ |\{v>0\}|>0 \}.
 \]
By similar ideas to \cite{jr1,jr2} we can obtain the following Lemma:
\begin{lemma}\label{proj}
	For any $(u,v)\in E_{+}$, there exists a unique $t_{0}>0$, depending on $(u,v)$ and $\lambda$, such~that
	\[
	(t_{0}u,t_{0}v)\in\mathcal{N}_{\lambda} \quad \mbox{and} \quad  I_{\lambda}(t_{0}u,t_{0}v)=\max_{t\geq0} I_{\lambda}(tu,tv).
	\]
\end{lemma}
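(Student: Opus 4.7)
The plan is to reduce the two-variable problem to a one-variable analysis of the function $g(t):=I_{\lambda}(tu,tv)$ on $[0,\infty)$, and to show that $g$ admits a unique positive maximizer $t_{0}$ which, by the chain rule, corresponds exactly to the fiber $(t_{0}u,t_{0}v)$ lying on $\mathcal{N}_{\lambda}$. Concretely, since $f_{i}(s)=0$ for $s\leq 0$, one has
\[
g(t)=\frac{t^{2}}{2}\,A(u,v)-\int_{\mathbb{R}^{N}}\bigl(F_{1}(tu)+F_{2}(tv)\bigr)\,\mathrm{d}x,
\quad A(u,v):=\|(u,v)\|_{E}^{2}-2\int_{\mathbb{R}^{N}}\lambda(x)uv\,\mathrm{d}x,
\]
and a direct computation shows that for $t>0$, $g'(t)=0$ if and only if $\langle I_{\lambda}'(tu,tv),(tu,tv)\rangle=0$, i.e. $(tu,tv)\in\mathcal{N}_{\lambda}$.

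First I would verify that $A(u,v)>0$; this is the exact analog of Lemma~\ref{nehari-1} with the potentials $V_{1,p},V_{2,p},\lambda_{p}$ replaced by $V_{1},V_{2},\lambda$ and hypothesis \ref{A3} replaced by the inequality $|\lambda(x)|\leq\delta\sqrt{V_{1}(x)V_{2}(x)}$ from \ref{v4}; the proof is word-for-word identical, giving $A(u,v)\geq(1-\delta)\|(u,v)\|_{E}^{2}>0$ whenever $(u,v)\ne(0,0)$. Next, I would establish the two end-behaviors of $g$. Near $t=0$, using \eqref{ej1} and the Sobolev embeddings, one gets
\[
g(t)\geq \tfrac{t^{2}}{2}(1-\delta)\|(u,v)\|_{E}^{2}-\varepsilon t^{2}\bigl(\|u\|_{2}^{2}+\|v\|_{2}^{2}\bigr)-C_{\varepsilon}\bigl(t^{p_{1}}\|u\|_{p_{1}}^{p_{1}}+t^{p_{2}}\|v\|_{p_{2}}^{p_{2}}\bigr),
\]
which, after choosing $\varepsilon$ small, is strictly positive for small $t>0$. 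At infinity, because $(u,v)\in E_{+}$ the set $\{u>0\}\cup\{v>0\}$ has positive measure, so the superlinearity part of \ref{f1} (which gives $F_{i}(s)/s^{2}\to+\infty$ as $s\to+\infty$) together with Fatou's lemma yields
\[
\liminf_{t\to+\infty}\frac{1}{t^{2}}\int_{\mathbb{R}^{N}}\bigl(F_{1}(tu)+F_{2}(tv)\bigr)\,\mathrm{d}x=+\infty,
\]
hence $g(t)\to-\infty$. Since $g$ is continuous on $[0,\infty)$, the maximum is attained at some interior point $t_{0}>0$, and there $g'(t_{0})=0$, so $(t_{0}u,t_{0}v)\in\mathcal{N}_{\lambda}$.

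For uniqueness, the key tool is assumption \ref{f4}. Define on $(0,\infty)$ the auxiliary function
\[
h(t):=\frac{g'(t)}{t}=A(u,v)-\int_{\{u>0\}}\frac{f_{1}(tu)}{tu}\,u^{2}\,\mathrm{d}x-\int_{\{v>0\}}\frac{f_{2}(tv)}{tv}\,v^{2}\,\mathrm{d}x.
\]
Because $s\mapsto f_{i}(s)/s$ is strictly increasing on $(0,+\infty)$ by \ref{f4}, each of the two integrands is strictly increasing in $t$ on the set where $u>0$ (respectively $v>0$); since $(u,v)\in E_{+}$, at least one of these sets has positive measure, so $h$ is strictly decreasing. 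Consequently $g'$ has at most one zero in $(0,\infty)$, which proves the uniqueness of $t_{0}$; combined with the existence step this also shows $t_{0}$ must be the global maximizer on $[0,\infty)$, since otherwise $g'$ would vanish at another interior point.

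I expect the only delicate step to be the behavior as $t\to+\infty$: one has to be careful to apply Fatou's lemma only to the nonnegative parts of $F_{i}(tu),F_{i}(tv)$ and to keep the quadratic term $\tfrac{t^{2}}{2}A(u,v)$ on the correct side; the superlinearity assumption is used here in its integrated form $F_{i}(s)/s^{2}\to+\infty$, which follows from \ref{f1} via L'H\^{o}pital or a direct integration argument. All other steps are routine once the analog of Lemma~\ref{nehari-1} is in place for the asymptotically periodic framework.
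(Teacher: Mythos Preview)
Your proof is correct and follows the standard fibering-map argument; the paper itself does not spell out a proof of this lemma but simply refers to \cite{jr1,jr2}, and your approach---checking positivity of the quadratic part via the analogue of Lemma~\ref{nehari-1}, establishing $g(t)>0$ for small $t$ and $g(t)\to-\infty$ via \ref{f1} and Fatou, then obtaining uniqueness from the strict monotonicity in \ref{f4} through $h(t)=g'(t)/t$---is exactly the expected one. The only minor remark is that \ref{f4} as written says ``increasing'' rather than ``strictly increasing'', but the paper clearly intends the strict reading (it uses strict inequalities in Lemma~\ref{gs} and relies on uniqueness throughout), so your use of it is consistent with the paper's conventions.
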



\begin{lemma}\label{gs}
	If \ref{f4} holds, then	$f_{i}(t)s-2F_{i}(t)$ is increasing for~$t>0$ and $i=1,2.$
\end{lemma}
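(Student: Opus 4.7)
The plan is to show that $g_i(t) := f_i(t)t - 2F_i(t)$ is strictly increasing on $(0,\infty)$. The natural heuristic would be to differentiate, since $g_i'(t) = t f_i'(t) - f_i(t)$ and the monotonicity of $f_i(\tau)/\tau$ is morally $(f_i'(\tau)\tau - f_i(\tau))/\tau^2 \geq 0$. However, \ref{f1} only provides continuity of $f_i$, not differentiability, so I have to argue directly from the integral expression for $F_i$ together with \ref{f4}.

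The key step will be to compare $F_i(t)-F_i(s)$ with a quantity involving $f_i(t)/t$. Fix $0<s<t$ and observe that by \ref{f4}, for every $\tau\in(s,t)$ we have $f_i(\tau) < (f_i(t)/t)\tau$. Integrating over $[s,t]$ gives
\[
F_i(t)-F_i(s) < \frac{f_i(t)}{2t}(t^2-s^2).
\]
Multiplying by $-2$ and adding $f_i(t)t$ to both sides yields
\[
f_i(t)t - 2F_i(t) > -2F_i(s) + \frac{f_i(t)}{t}\,s^2.
\]

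To finish, I will invoke \ref{f4} a second time, now in the form $f_i(t)/t > f_i(s)/s$, and multiply through by $s^2>0$ to get $\frac{f_i(t)}{t}s^2 > f_i(s)s$. Substituting this into the previous inequality gives $g_i(t) > f_i(s)s - 2F_i(s) = g_i(s)$, which is exactly the claim. I do not expect any genuine obstacle: the proof amounts to combining two applications of \ref{f4}, one inside the integral bound for $F_i(t)-F_i(s)$ and one in the final comparison, and it works uniformly for $i=1,2$.
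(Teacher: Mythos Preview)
Your proof is correct and is essentially the same as the paper's: both use \ref{f4} twice, once to bound $\int_{s}^{t} f_i(\tau)\,\ud\tau$ by $\frac{f_i(t)}{2t}(t^2-s^2)$ and once to compare $\frac{f_i(t)}{t}s^2$ with $f_i(s)s$. The paper applies the two inequalities in the opposite order (first replacing $f_i(t_1)t_1$ by $\frac{f_i(t_2)}{t_2}t_1^2$, then bounding the integral), but the argument is identical in substance.
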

\begin{proof}
	In fact, let $0<t_{1}<t_{2}$ be fixed. Using \ref{f4} we deduce that
	\begin{equation}\label{gs1}
	f_{i}(t_{1})t_{1}-2F_{i}(t_{1}) < \frac{f_{i}(t_{2})}{t_{2}}t_{1}^{2}-2F_{i}(t_{2})+2\int_{t_{1}}^{t_{2}}f_{i}(\tau)\,\ud\tau.
	\end{equation}
	Moreover,
	\begin{equation}\label{gs2}
	2\int_{t_{1}}^{t_{2}}f_{i}(\tau)\,\ud\tau<2\frac{f_{i}(t_{2})}{t_{2}}\int_{t_{1}}^{t_{2}}\tau\,\ud\tau=\frac{f_{i}(t_{2})}{t_{2}}(t_{2}^{2}-t_{1}^{2}).
	\end{equation}
	Combining \eqref{gs1} and \eqref{gs2} we conclude that
	$$
	f_{i}(t_{1})t_{1}-2F_{i}(t_{1}) < f_{i}(t_{2})t_{2}-2F_{i}(t_{2}),
	$$
	which finishes the proof.
\end{proof}

We introduce the Nehari energy levels associated with Systems~\eqref{j0} and \eqref{paper1jap} respectively by
\[
c_{\mathcal{N}_{\lambda,p}}=\inf_{(u,v)\in\mathcal{N}_{\lambda,p}} I_{\lambda,p}(u,v) \quad \mbox{and} \quad c_{\mathcal{N}_{\lambda}}=\inf_{(u,v)\in\mathcal{N}_{\lambda}} I_{\lambda}(u,v).
\]
By using standard arguments it is not hard to check that under our assumptions the levels $c_{\mathcal{N}_{\lambda,p}}$ and $c_{\mathcal{N}_{\lambda}}$ are positive for all nonnegative coupling function $\lambda$. The remainder of this section is devoted to study the behavior of $c_{\mathcal{N}_{\lambda,p}}$ and $c_{\mathcal{N}_{\lambda}}$. The next Lemma establish some estimates in order to compare the mountain pass level and the least energy level.

\begin{lemma}\label{nehari}
	The following estimates hold:
	\begin{itemize}
		\item[(i)] $c\leq c_{\mathcal{N}_{\lambda}}$;
		\item[(ii)] $c_{\mathcal{N}_{\lambda}}<c_{\mathcal{N}_{\lambda,p}}$.
	\end{itemize}
\end{lemma}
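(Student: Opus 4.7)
The plan for (i) is to convert an arbitrary element $(u,v)\in\mathcal{N}_\lambda$ into an admissible mountain pass path. Since $(u,v)\in E_+$, Lemma \ref{proj} applied to $I_\lambda$ yields $I_\lambda(u,v)=\max_{t\geq 0}I_\lambda(tu,tv)$, and the superlinear behavior \ref{f1} (combined with Fatou's lemma as in the proof of Lemma \ref{mpg}) implies $I_\lambda(tu,tv)\to -\infty$ as $t\to +\infty$. I would then choose $T>0$ so large that $\|(Tu,Tv)\|_E>\varrho$ and $I_\lambda(Tu,Tv)<0$, and observe that the mountain pass value in Theorem \ref{MP} is insensitive to the choice of negative-energy endpoint; the path $\gamma(s)=(sTu,sTv)$, $s\in[0,1]$, is therefore admissible and gives
$$
c\,\leq\,\max_{s\in[0,1]}I_\lambda(sTu,sTv)\,\leq\,\max_{t\geq 0}I_\lambda(tu,tv)\,=\,I_\lambda(u,v).
$$
Taking the infimum over $\mathcal{N}_\lambda$ yields (i).

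For (ii) my strategy is to compare against a positive minimizer on the periodic Nehari manifold. Under \ref{f4}, the level $c_{\mathcal{N}_{\lambda,p}}$ coincides with the periodic mountain pass level, and the $\mathbb{Z}^N$-translation argument of Section \ref{proofA}, applied to the Cerami sequence from Theorem \ref{MP} (bounded thanks to \ref{f3}), produces a strictly positive critical point $(u_p,v_p)\in\mathcal{N}_{\lambda,p}$ with $I_{\lambda,p}(u_p,v_p)=c_{\mathcal{N}_{\lambda,p}}$; positivity follows from the maximum principle, as in the conclusion of Theorem \ref{A}. Applying Lemma \ref{proj} to $I_\lambda$ then produces a unique $t_0>0$ with $(t_0 u_p,t_0 v_p)\in\mathcal{N}_\lambda$ and $I_\lambda(t_0 u_p,t_0 v_p)=\max_{t\geq 0}I_\lambda(tu_p,tv_p)$.

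The conclusion will rest on the identity, valid for every $t>0$,
$$
I_{\lambda,p}(tu_p,tv_p)-I_\lambda(tu_p,tv_p)=\tfrac{t^2}{2}\int_{\mathbb{R}^N}(V_{1,p}-V_1)u_p^{2}\,\ud x+\tfrac{t^2}{2}\int_{\mathbb{R}^N}(V_{2,p}-V_2)v_p^{2}\,\ud x+t^2\int_{\mathbb{R}^N}(\lambda-\lambda_p)u_p v_p\,\ud x,
$$
each term of which is strictly positive by \ref{v3}, \ref{v4} and the positivity of $u_p,v_p$. Chaining this strict inequality with the Nehari characterization of $(u_p,v_p)$ yields
$$
c_{\mathcal{N}_\lambda}\,\leq\,I_\lambda(t_0 u_p,t_0 v_p)\,<\,I_{\lambda,p}(t_0 u_p,t_0 v_p)\,\leq\,\max_{t\geq 0}I_{\lambda,p}(tu_p,tv_p)\,=\,I_{\lambda,p}(u_p,v_p)\,=\,c_{\mathcal{N}_{\lambda,p}}.
$$

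The main obstacle I expect is the attainment of $c_{\mathcal{N}_{\lambda,p}}$ by a positive pair. Working instead with a mere minimizing sequence on $\mathcal{N}_{\lambda,p}$ would allow the strict inequality above to collapse to a nonstrict one after passage to the limit, with no uniform gap. Assumption \ref{f4} is precisely what enables me to replace the minimizing sequence by a genuine minimizer: it identifies the Nehari and mountain pass levels and thereby reduces the question to the compactness analysis already developed in Section \ref{proofA}.
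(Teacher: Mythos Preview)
Your proposal is correct and follows essentially the same line as the paper's proof: for (i) both arguments use Lemma~\ref{proj} to pass from a Nehari element to an admissible mountain-pass path, and for (ii) both first secure a nonnegative minimizer $(u_p,v_p)$ for $c_{\mathcal{N}_{\lambda,p}}$, then project it onto $\mathcal{N}_\lambda$ via Lemma~\ref{proj} and invoke the strict sign conditions in \ref{v3}--\ref{v4} to get the strict inequality. The only cosmetic difference is how the periodic minimizer is produced: the paper takes a minimizing sequence on $\mathcal{N}_{\lambda,p}$ (using that the Nehari set is a natural constraint) and then applies the compactness/Fatou argument of Section~\ref{proofA}, whereas you reach the same minimizer through the mountain-pass Cerami sequence together with the identification of the mountain-pass and Nehari levels under \ref{f4}; in either route the Fatou step (via Lemma~\ref{gs} or \ref{f3}) is what pins the energy of the limit to $c_{\mathcal{N}_{\lambda,p}}$.
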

\begin{proof}
	Let $(u,v)\in\mathcal{N}_{\lambda}$ be fixed. In view of Lemma~\ref{proj} we have that $I_{\lambda}(u,v)=\max_{t\geq0}I_{\lambda}(tu,tv)$. Let $\gamma:[0,1]\rightarrow E$ be defined by $\gamma(t)=(tt_{0}u,tt_{0}v)$, where $t_{0}>0$ large enough such that $I_{\lambda}(t_{0}u,t_{0}v)<0$. Thus, $\gamma\in\Gamma$. Therefore,
	\begin{equation}\label{ej10}
	c\leq\max_{t\in[0,1]}I_{\lambda}(\gamma(t))\leq\max_{t\geq0}I_{\lambda}(tu,tv)=I_{\lambda}(u,v).
	\end{equation}
	Since \eqref{ej10} holds for all $(u,v)\in\mathcal{N}_{\lambda}$, we conclude that $c\leq c_{\mathcal{N}_{\lambda}}$.
	
	In order to prove $(ii)$, let $(u_{n},v_{n})_{n}\subset\mathcal{N}_{\lambda,p}$ be a minimizing sequence for $c_{\mathcal{N}_{\lambda,p}}$, that is, $I_{\lambda,p}(u_{n},v_{n})\rightarrow c_{\mathcal{N}_{\lambda,p}}$. It is well known that under our assumptions $\mathcal{N}_{\lambda,p}$ is a natural constraint to our problem, that is, critical points of $I_{\lambda,p}\mid_{\mathcal{N}_{\lambda,p}}$ are critical points of $I_{\lambda,p}$. This is a consequence of Lagrange multiplier Theorem. Hence, similarly to Section~\ref{proofA}, we are able to prove that, up to a subsequence, $(u_{n},v_{n})\rightharpoonup (u,v)$ weakly in $E_{p}$, where $u>0$, $v>0$ and $I_{\lambda,p}^{\prime}(u,v)=0$. Obviously, $c_{\mathcal{N}_{\lambda,p}}\leq I_{\lambda,p}(u,v)$. On the other hand, in view of Lemma~\ref{gs} and Fatou's Lemma, we deduce that
	 \begin{alignat*}{2}
	   c_{\mathcal{N}_{\lambda,p}}+o_{n}(1) & = I_{\lambda,p}(u_{n},v_{n})-\frac{1}{2}\langle I_{\lambda,p}^{\prime}(u_{n},v_{n}),(u_{n},v_{n})\rangle\\
	                            & = \frac{1}{2}\int_{\mathbb{R}^{N}}(f_{1}(u_{n})u_{n}-2F_{1}(u_{n}))\,\mathrm{d}x+\frac{1}{2}\int_{\mathbb{R}^{N}}(f_{2}(v_{n})v_{n}-2F_{2}(v_{n}))\,\mathrm{d} x\\
	                            & \geq \frac{1}{2}\int_{\mathbb{R}^{N}}(f_{1}(u)u-2F_{1}(u))\,\mathrm{d} x+\frac{1}{2}\int_{\mathbb{R}^{N}}(f_{2}(v)v-2F_{2}(v))\,\mathrm{d} x+o_{n}(1)\\
	                            & = I_{\lambda,p}(u,v)-\frac{1}{2}\langle I_{\lambda,p}^{\prime}(u,v),(u,v)\rangle+o_{n}(1)\\
	                            & = I_{\lambda,p}(u,v)+o_{n}(1),
	 \end{alignat*}
	which implies that $c_{\mathcal{N}_{\lambda,p}}\geq I_{\lambda,p}(u,v)$. Therefore, $I_{\lambda,p}(u,v)=c_{\mathcal{N}_{\lambda,p}}$. In light of \ref{v4} one has
	\[
	\int_{\mathbb{R}^{N}}\left\{[V_{1}(x)-V_{1,p}(x)]u^{2}+[V_{2}(x)-V_{2,p}(x)]v^{2}+[\lambda_{p}(x)-\lambda(x)]uv \right\}\,\mathrm{d}x<0.
	\]
	In view of Lemma~\ref{proj}, there exists a unique $t_{0}>0$ such that $(t_{0}u,t_{0}v)\in\mathcal{N}_{\lambda}$. Hence, it follows that $I_{\lambda}(t_{0}u,t_{0}v)-I_{\lambda,p}(t_{0}u,t_{0}v)<0$. Therefore, we have
	\[
	c_{\mathcal{N}_{\lambda}}\leq I_{\lambda}(t_{0}u,t_{0}v)<I_{\lambda,p}(t_{0}u,t_{0}v)\leq \max_{t\geq0}I_{\lambda,p}(tu,tv)=I_{\lambda,p}(u,v)=c_{\mathcal{N}_{\lambda,p}},
	\]
	which implies $(ii)$ and finishes the proof.
\end{proof}

\begin{proposition}
	The map $\lambda \mapsto c_{\mathcal{N}_{\lambda}}$ is decreasing in the following sense: if $\lambda_{1},\lambda_{2}\in L^{\infty}(\mathbb{R}^{N})$ satisfy $\lambda_{1}(x)<\lambda_{2}(x)$ for all $x\in\mathbb{R}^{N}$, then $c_{\mathcal{N}_{\lambda_{2}}}<c_{\mathcal{N}_{\lambda_{1}}}$.
\end{proposition}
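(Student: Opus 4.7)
The plan is to mimic the comparison technique in Lemma~\ref{nehari}(ii), with $\lambda_{1}$ playing the role of $\lambda_{p}$ and $\lambda_{2}$ the role of $\lambda$. Everything hinges on the pointwise identity
\[
I_{\lambda_{2}}(tu,tv) \;=\; I_{\lambda_{1}}(tu,tv) \;-\; t^{2}\int_{\mathbb{R}^{N}}\bigl[\lambda_{2}(x)-\lambda_{1}(x)\bigr]\,uv\,\mathrm{d}x,
\]
which shows that enlarging the coupling function strictly depresses the energy along any ray $t\mapsto(tu,tv)$ whenever $uv$ is positive on a set of positive measure.

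The first step is to establish that $c_{\mathcal{N}_{\lambda_{1}}}$ is attained by some $(u,v)$ with $u,v>0$ in $\mathbb{R}^{N}$. This proceeds exactly as in Lemma~\ref{nehari}(ii) and the proofs of Theorems~\ref{A} and~\ref{B}: a minimizing sequence on $\mathcal{N}_{\lambda_{1}}$ becomes a Cerami sequence through the Lagrange multiplier principle; its boundedness follows from the nonquadraticity argument of Proposition~\ref{p1}; the vanishing alternative is ruled out by Lemma~\ref{lions} combined with $\mathbb{Z}^{N}$-translations to handle the periodic part, producing a nontrivial weak limit; Lemma~\ref{gs} together with Fatou's lemma upgrade this weak limit to an actual minimizer; finally, the positive-part trick and the strong maximum principle from the end of Section~\ref{proofA} yield strict positivity of both components.

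With such a minimizer $(u,v)\in E_{+}$ in hand, Lemma~\ref{proj} applied with coupling $\lambda_{2}$ provides a unique $t_{0}>0$ with $(t_{0}u,t_{0}v)\in\mathcal{N}_{\lambda_{2}}$ and $I_{\lambda_{2}}(t_{0}u,t_{0}v)=\max_{t\ge 0}I_{\lambda_{2}}(tu,tv)$. Plugging into the driving identity and using the hypothesis $\lambda_{2}>\lambda_{1}$ together with $uv>0$ almost everywhere gives $I_{\lambda_{2}}(t_{0}u,t_{0}v)<I_{\lambda_{1}}(t_{0}u,t_{0}v)$. Chaining with
\[
I_{\lambda_{1}}(t_{0}u,t_{0}v) \;\le\; \max_{t\ge 0}I_{\lambda_{1}}(tu,tv) \;=\; I_{\lambda_{1}}(u,v) \;=\; c_{\mathcal{N}_{\lambda_{1}}},
\]
where the first equality is Lemma~\ref{proj} for $\lambda_{1}$, produces $c_{\mathcal{N}_{\lambda_{2}}}\le I_{\lambda_{2}}(t_{0}u,t_{0}v)<c_{\mathcal{N}_{\lambda_{1}}}$, which is the required strict inequality. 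The main obstacle is the first step: if either component of the minimizer were to vanish identically, the integral in the driving identity would collapse and the strict inequality would be lost. Securing both $u>0$ and $v>0$ is what necessitates the full existence-and-positivity machinery of Theorem~\ref{B}, and depends implicitly on some positivity of the coupling function, consistent with the framework in which the proposition is invoked together with Theorems~\ref{B} and~\ref{C}.
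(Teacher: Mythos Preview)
Your proof is correct and follows essentially the same approach as the paper: take a positive minimizer $(u,v)$ for $c_{\mathcal{N}_{\lambda_{1}}}$ (whose existence is imported from Section~\ref{proofB}), project onto $\mathcal{N}_{\lambda_{2}}$ via Lemma~\ref{proj}, and chain the inequalities $c_{\mathcal{N}_{\lambda_{2}}}\le I_{\lambda_{2}}(t_{0}u,t_{0}v)<I_{\lambda_{1}}(t_{0}u,t_{0}v)\le I_{\lambda_{1}}(u,v)=c_{\mathcal{N}_{\lambda_{1}}}$. You add welcome detail---in particular the explicit identity linking $I_{\lambda_{1}}$ and $I_{\lambda_{2}}$ and the observation that strict positivity of both components is what secures the strict inequality---where the paper simply writes ``Notice that $I_{\lambda_{2}}(t_{0}u,t_{0}v)<I_{\lambda_{1}}(t_{0}u,t_{0}v)$.''
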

\begin{proof}
	Let $(u,v)\in\mathcal{N}_{\lambda_{1}}$ be such that $I_{\lambda_{1}}(u,v)=c_{\mathcal{N}_{\lambda_{1}}}$ (see Section~\ref{proofB}). In view of Lemma~\ref{proj}, there exists $t_{0}>0$ such that $(t_{0}u,t_{0}v)\in\mathcal{N}_{\lambda_{2}}$. Notice that $I_{\lambda_{2}}(t_{0}u,t_{0}v)<I_{\lambda_{1}}(t_{0}u,t_{0}v)$. Thus, we have
	\[
	c_{\mathcal{N}_{\lambda_{2}}}\leq I_{\lambda_{2}}(t_{0}u,t_{0}v)<I_{\lambda_{1}}(t_{0}u,t_{0}v)\leq \max_{t\geq0}I_{\lambda_{1}}(tu,tv)=I_{\lambda_{1}}(u,v)=c_{\mathcal{N}_{\lambda_{1}}}.
	\]
	Therefore, $c_{\mathcal{N}_{\lambda_{2}}}<c_{\mathcal{N}_{\lambda_{1}}}$ and the map $\lambda \mapsto c_{\mathcal{N}_{\lambda}}$ is decreasing.
\end{proof}

\begin{proposition}\label{conve}
	Let $(\lambda_{n})_{n}\subset L^{\infty}(\mathbb{R}^{N})$ be a sequence of positive functions such that $\lambda_{n}\rightarrow\lambda$ strongly in $L^{\infty}(\mathbb{R}^{N})$. Then, one has
	\[
	\displaystyle\lim_{n\rightarrow+\infty}c_{\mathcal{N}_{\lambda_{n}}}=c_{\mathcal{N}_{\lambda}}.
	\]
\end{proposition}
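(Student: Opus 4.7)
The strategy is to establish the two–sided squeeze $\limsup_{n} c_{\mathcal{N}_{\lambda_{n}}}\leq c_{\mathcal{N}_{\lambda}}\leq \liminf_{n} c_{\mathcal{N}_{\lambda_{n}}}$ by comparing the two energies through the Nehari projection provided by Lemma~\ref{proj}. A preliminary observation is that, since $V_{i}\geq V_{0}>0$ and $|\lambda(x)|\leq \delta\sqrt{V_{1}(x)V_{2}(x)}$ with $\delta\in(0,1)$, the $L^{\infty}$--convergence $\lambda_{n}\to\lambda$ yields $|\lambda_{n}(x)|\leq \delta'\sqrt{V_{1}(x)V_{2}(x)}$ for some $\delta'\in(\delta,1)$ once $n$ is large; hence the coercivity inequality of Lemma~\ref{nehari-1} (transposed to the space $E$) holds with a constant $1-\delta'>0$ uniform in $n$.

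\textbf{Upper estimate.} Let $(u,v)\in\mathcal{N}_{\lambda}$ be a ground state produced by Theorem~\ref{B}, so that $I_{\lambda}(u,v)=c_{\mathcal{N}_{\lambda}}$. By Lemma~\ref{proj} applied with coupling function $\lambda_{n}$, there is a unique $t_{n}>0$ with $(t_{n}u,t_{n}v)\in\mathcal{N}_{\lambda_{n}}$, i.e.
\[
\|(u,v)\|_{E}^{2}-2\int_{\mathbb{R}^{N}}\lambda_{n}uv\,\ud x=\int_{\mathbb{R}^{N}}\frac{f_{1}(t_{n}u)}{t_{n}u}u^{2}\,\ud x+\int_{\mathbb{R}^{N}}\frac{f_{2}(t_{n}v)}{t_{n}v}v^{2}\,\ud x.
\]
The left–hand side converges to $\|(u,v)\|_{E}^{2}-2\int\lambda uv\,\ud x$, which is the positive value $\int[f_{1}(u)u+f_{2}(v)v]\,\ud x$ dictated by $(u,v)\in\mathcal{N}_{\lambda}$. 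Assumption \ref{f1} forces the right–hand side to blow up if $t_{n}\to\infty$ along a subsequence and to vanish if $t_{n}\to 0$, ruling out both extremes. Along any cluster point $t^{\ast}>0$, passing to the limit and invoking the uniqueness in Lemma~\ref{proj} gives $t^{\ast}=1$, so the whole sequence $t_{n}\to 1$. Dominated convergence (using \ref{f2} to dominate $F_{i}(t_{n}\cdot)$) then yields $I_{\lambda_{n}}(t_{n}u,t_{n}v)\to I_{\lambda}(u,v)=c_{\mathcal{N}_{\lambda}}$, and since $c_{\mathcal{N}_{\lambda_{n}}}\leq I_{\lambda_{n}}(t_{n}u,t_{n}v)$, we obtain $\limsup_{n}c_{\mathcal{N}_{\lambda_{n}}}\leq c_{\mathcal{N}_{\lambda}}$.

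\textbf{Lower estimate.} For each $n$ choose $(u_{n},v_{n})\in\mathcal{N}_{\lambda_{n}}$ with $I_{\lambda_{n}}(u_{n},v_{n})\leq c_{\mathcal{N}_{\lambda_{n}}}+1/n$. Combining the upper bound just obtained with \ref{f3} and arguing exactly as in Proposition~\ref{p1} (nonquadraticity gives a uniform $L^{\alpha}$–bound, and interpolation together with the uniform coercivity upgrades it to a bound in $E$), we obtain $\|(u_{n},v_{n})\|_{E}\leq C$. Let $s_{n}>0$ be the unique scale with $(s_{n}u_{n},s_{n}v_{n})\in\mathcal{N}_{\lambda}$. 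The standard Nehari lower bound stemming from \ref{f1}--\ref{f2} and Lemma~\ref{nehari-1} yields $\|(s_{n}u_{n},s_{n}v_{n})\|_{E}\geq \rho_{0}>0$, hence $s_{n}\geq \rho_{0}/C>0$; and the superlinear growth in \ref{f1}, tested against the Nehari identity for $(s_{n}u_{n},s_{n}v_{n})$ divided by $s_{n}^{2}$, precludes $s_{n}\to\infty$ (otherwise the right–hand side would blow up while the left–hand side stays $O(1)$). With $s_{n}$ bounded both ways, we compute
\[
c_{\mathcal{N}_{\lambda}}\leq I_{\lambda}(s_{n}u_{n},s_{n}v_{n})=I_{\lambda_{n}}(s_{n}u_{n},s_{n}v_{n})+s_{n}^{2}\int_{\mathbb{R}^{N}}(\lambda_{n}-\lambda)u_{n}v_{n}\,\ud x\leq I_{\lambda_{n}}(u_{n},v_{n})+o(1),
\]
where we used that $t\mapsto I_{\lambda_{n}}(tu_{n},tv_{n})$ attains its maximum at $t=1$ (since $(u_{n},v_{n})\in\mathcal{N}_{\lambda_{n}}$) together with $\|\lambda_{n}-\lambda\|_{\infty}\to 0$ and $\|u_{n}\|_{2}\|v_{n}\|_{2}\leq C$. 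Taking $\liminf$ yields $c_{\mathcal{N}_{\lambda}}\leq \liminf_{n}c_{\mathcal{N}_{\lambda_{n}}}$, completing the proof.

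\textbf{Main obstacle.} The delicate point is the uniform two–sided control of the projection scales $t_{n}$ and $s_{n}$: upper bounds require the superlinear character of $f_{i}$, whereas lower bounds require the uniform Nehari lower bound, which in turn relies on the uniform coercivity derived from the $L^{\infty}$–closeness of $\lambda_{n}$ to $\lambda$. Once this bookkeeping is in place, the continuity $c_{\mathcal{N}_{\lambda_{n}}}\to c_{\mathcal{N}_{\lambda}}$ follows from the two comparison inequalities above.
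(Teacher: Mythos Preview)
Your proof is correct and follows essentially the same two–sided comparison via Nehari projection that the paper uses: project a ground state for $\lambda$ onto $\mathcal{N}_{\lambda_{n}}$ to get $\limsup c_{\mathcal{N}_{\lambda_{n}}}\leq c_{\mathcal{N}_{\lambda}}$, then project (near-)minimizers for $\lambda_{n}$ onto $\mathcal{N}_{\lambda}$ and use the maximality $I_{\lambda_{n}}(s_{n}u_{n},s_{n}v_{n})\leq I_{\lambda_{n}}(u_{n},v_{n})$ together with $\|\lambda_{n}-\lambda\|_{\infty}\to 0$ for the reverse inequality. The only differences are cosmetic: the paper pushes the projection scales all the way to $1$ using \ref{f4}, whereas you (sufficiently) establish only two–sided bounds on them, and you work with near-minimizers rather than exact ground states for $\lambda_{n}$.
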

\begin{proof}
	Let $(u_{\lambda},v_{\lambda})\in\mathcal{N}_{\lambda}$ be a positive ground state solution for System~\eqref{paper1jap} (see Section~\ref{proofB}). For each $n\in\mathbb{N}$, there exists $t_{n}>0$ such that $(t_{n}u_{\lambda},t_{n}v_{\lambda})\in\mathcal{N}_{\lambda_{n}}$. Thus, taking account that $\lambda_{n}\rightarrow\lambda$ strongly in $L^{\infty}(\mathbb{R}^{N})$ we can deduce~that
	\begin{equation*}
	o_{n}(1)= \int_{\mathbb{R}^{N}}\left(\frac{f_{1}(t_{n}u_{\lambda})}{t_{n}}u_{\lambda}-f_{1}(u_{\lambda})u_{\lambda} \right)\,\mathrm{d}x+\int_{\mathbb{R}^{N}}\left(\frac{f_{2}(t_{n}v_{\lambda})}{t_{n}}v_{\lambda}-f_{2}(v_{\lambda})v_{\lambda} \right)\,\mathrm{d}x.
	\end{equation*}
	We claim that $\lim_{n\rightarrow+\infty}t_{n}= 1$. In fact, arguing by contradiction let us suppose that $\limsup_{n\rightarrow+\infty}t_{n}> 1$. Hence, $t_{n}\geq 1+\varepsilon_{0}$ for $n\in\mathbb{N}$ large. By using \ref{f4} we obtain
	\begin{equation*}
	o_{n}(1)\geq \int_{\mathbb{R}^{N}}\left(\frac{f_{1}((1+\varepsilon_{0})u_{\lambda})}{1+\varepsilon_{0}}u_{\lambda}-f_{1}(u_{\lambda})u_{\lambda} \right)\,\mathrm{d}x+\int_{\mathbb{R}^{N}}\left(\frac{f_{2}((1+\varepsilon_{0})v_{\lambda})}{1+\varepsilon_{0}}v_{\lambda}-f_{2}(v_{\lambda})v_{\lambda} \right)\,\mathrm{d}x.
	\end{equation*}
	Thus, it follows that
	\begin{equation*}
	o_{n}(1)> \int_{\mathbb{R}^{N}}\left(f_{1}(u_{\lambda})u_{\lambda}-f_{1}(u_{\lambda})u_{\lambda} \right)\,\mathrm{d}x+\int_{\mathbb{R}^{N}}\left(f_{2}(v_{\lambda})v_{\lambda}-f_{2}(v_{\lambda})v_{\lambda} \right)\,\mathrm{d}x=0,
	\end{equation*}
	which is not possible. Thus, we have concluded that $\limsup_{n\rightarrow+\infty}t_{n}\leq 1$. If we suppose that $\limsup_{n\rightarrow+\infty}t_{n}< 1$, we get a contradiction applying similar arguments. Hence, $\limsup_{n\rightarrow+\infty}t_{n}= 1$. Analogously, we can check that $\liminf_{n\rightarrow+\infty}t_{n}= 1$. Therefore, $\lim_{n\rightarrow+\infty}t_{n}= 1$. Finally, since $(t_{n}u_{\lambda},t_{n}v_{\lambda})\in\mathcal{N}_{\lambda_{n}}$, we have that
	\begin{alignat*}{2}
	c_{\mathcal{N}_{\lambda_{n}}} & \leq I_{\lambda_{n}}(t_{n}u_{\lambda},t_{n}v_{\lambda})\\
	& = \frac{t_{n}^{2}}{2}\left(\|(u_{\lambda},v_{\lambda})\|_{E}-2\int_{\mathbb{R}^{N}}\lambda_{n}(x)u_{\lambda}v_{\lambda}\,\mathrm{d}x \right)-\int_{\mathbb{R}^{N}}(F_{1}(t_{n}u_{\lambda})+F_{2}(t_{n}v_{\lambda}))\,\mathrm{d}x\\
	& = \frac{1}{2}\left(\|(u_{\lambda},v_{\lambda})\|_{E}-2\int_{\mathbb{R}^{N}}\lambda(x)u_{\lambda}v_{\lambda}\,\mathrm{d}x \right)-\int_{\mathbb{R}^{N}}(F_{1}(u_{\lambda})+F_{2}(v_{\lambda}))\,\mathrm{d}x+o_{n}(1)\\
	& = I_{\lambda}(u_{\lambda},v_{\lambda})+o_{n}(1),
	\end{alignat*}
	which implies that
	$
	\lim_{n\rightarrow+\infty}c_{\mathcal{N}_{\lambda_{n}}}\leq c_{\mathcal{N}_{\lambda}}.
	$
	On the other hand, for each $n\in\mathbb{N}$ let $(u_{n},v_{n})_{n}\in\mathcal{N}_{\lambda_{n}}$ be such that $I_{\lambda_{n}}(u_{n},v_{n})=c_{\mathcal{N}_{\lambda_{n}}}$. Notice that
	\begin{alignat*}{2}
	c_{\mathcal{N}_{\lambda}} +o_{n}(1) & = I_{\lambda_{n}}(u_{n},v_{n})-\frac{1}{2}\langle I_{\lambda_{n}}^{\prime}(u_{n},v_{n}),(u_{n},v_{n})\rangle\\
	& = \frac{1}{2}\int_{\mathbb{R}^{N}}(f_{1}(u_{n})u_{n}-2F_{1}(u_{n}))\,\ud x+\frac{1}{2}\int_{\mathbb{R}^{N}}(f_{2}(v_{n})v_{n}-2F_{2}(v_{n}))\,\ud x\\
	& \geq  \frac{a_{2}}{2}\|(u_{n},v_{n})\|_{\alpha}^{\alpha},
	\end{alignat*}
	which implies that $\|(u_{n},v_{n})\|_{\alpha}^{\alpha}\leq C$. Arguing as in the proof of Proposition~\ref{p1}, we can conclude that $(u_{n},v_{n})_{n}$ is bounded in $E$. In view of Lemma~\ref{proj}, there exists a sequence $(t_{n})_{n}\subset(0,+\infty)$ such that $(t_{n}u_{n},t_{n}v_{n})_{n}\subset\mathcal{N}_{\lambda}$. Arguing as before, it is not hard to check that $\lim_{n\rightarrow+\infty}t_{n}=1$. Therefore, we have
	\begin{alignat*}{2}
	c_{\mathcal{N}_{\lambda}} & \leq I_{\lambda}(t_{n}u_{n},t_{n}v_{n})\\
	& = \frac{t_{n}^{2}}{2}\left(\|(u_{n},v_{n})\|_{E}-2\int_{\mathbb{R}^{N}}\lambda(x)u_{n}v_{n}\,\mathrm{d}x \right)-\int_{\mathbb{R}^{N}}(F_{1}(t_{n}u_{n})+F_{2}(t_{n}v_{n}))\,\mathrm{d}x\\
	& = I_{\lambda_{n}}(t_{n}u_{n},t_{n}v_{n})+t_{n}^{2}\int_{\mathbb{R}^{N}}(\lambda_{n}(x)-\lambda(x))u_{n}v_{n}\,\mathrm{d}x\\
	& \leq \max_{t\geq0}I_{\lambda_{n}}(tu_{n},tv_{n})+o_{n}(1)\\
	& =I_{\lambda_{n}}(u_{n},v_{n})+o_{n}(1),
	\end{alignat*}
	which implies that $c_{\mathcal{N}_{\lambda}}\leq \lim_{n\rightarrow+\infty}c_{\mathcal{N}_{\lambda_{n}}}$ and finishes the proof.
\end{proof}


\section{Proof of Theorem~\ref{B}}\label{proofB}

In this Section we study the existence of ground states for the class of linearly coupled systems \eqref{paper1jap}. For this purpose, we introduce the following sets:
 $$
 I^{b}:=\{(u,v)\in E:I_{\lambda}(u,v)\leq b\},
 $$
  \vspace{-0,7cm}
 $$
  K:=\{(u,v)\in E:I_{\lambda}'(u,v)=0\},
 $$
  \vspace{-0,7cm}
 $$
  K_{b}:=\{(u,v)\in K: I_{\lambda}(u,v)=b\}.
 $$
In order to get a nontrivial solution for \eqref{paper1jap}, we can not repeat the idea used in Section~\ref{proofA}, since the energy function $I$ is not invariant by translations. In order to overcome this difficulty, we shall use the following local version of the Mountain Pass Theorem (see \cite{MR2532816}):

\begin{theoremletter}\label{elves}
	Let $X$ be a real Banach space. Suppose that $J\in C^{1}(X,\mathbb{R})$ satisfies $J(0)=0$, $(I_{1})$ and $(I_{2})$ (see Theorem~\ref{MP}). If there exists $\gamma_{0}\in\Gamma$, $\Gamma$ defined by \eqref{path}, such that
	 $$
	  c=\inf_{\gamma\in\Gamma}\max_{t\in[0,1]}J(\gamma(t))=\max_{t\in[0,1]}J(\gamma_{0}(t))>0,
	 $$
	then $J$ possesses a nontrivial critical point $u\in K_{c}\cap\gamma_{0}([0,1])$.
\end{theoremletter}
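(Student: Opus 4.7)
The plan is to argue by contradiction using a localized deformation along the compact path $\gamma_{0}([0,1])$, producing a competitor path in $\Gamma$ whose maximum value of $J$ is strictly less than $c$. Suppose, for contradiction, that $K_{c}\cap\gamma_{0}([0,1])=\emptyset$. Since $\gamma_{0}([0,1])$ is compact and $J'$ is continuous, there exist constants $\alpha,\delta>0$ and an open neighborhood $\mathcal{U}$ of the compact set $A:=\gamma_{0}([0,1])\cap J^{-1}([c-\delta,c+\delta])$ such that $\|J'(u)\|_{X^{*}}\geq\alpha$ for every $u\in\mathcal{U}$. Since $J(\gamma_{0}(0))=J(0)=0<c$ and $J(\gamma_{0}(1))=J(e)<0<c$, up to shrinking $\delta$ we may ensure that $A$ is uniformly separated from the endpoints $0$ and $e$.

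The second step is to build a deformation of $X$ which descends $J$ on $A$ while fixing the endpoints. I would construct a locally Lipschitz pseudo-gradient vector field $V:\widetilde{X}\to X$ on $\widetilde{X}:=\{u\in X:J'(u)\neq 0\}$ satisfying
\[
\|V(u)\|_{X}\leq 2\|J'(u)\|_{X^{*}}\quad\textrm{and}\quad \langle J'(u),V(u)\rangle\geq \|J'(u)\|_{X^{*}}^{2},
\]
and pair it with a Urysohn-type cutoff $\chi\in C(X,[0,1])$ with $\supp(\chi)\subset\mathcal{U}$ and $\chi\equiv 1$ on a smaller open set $\mathcal{U}_{0}\supset A$. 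The Cauchy problem $\dot{\eta}(s,u)=-\chi(\eta)V(\eta)$, $\eta(0,u)=u$, then defines a continuous deformation $\eta:[0,\infty)\times X\to X$; because $\chi$ vanishes outside $\mathcal{U}$, trajectories starting away from $\mathcal{U}$ are stationary, and in particular $\eta(s,0)=0$ and $\eta(s,e)=e$ for all $s\geq 0$.

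The third step is the quantitative descent estimate. For $u\in A$, while $\eta(s,u)\in\mathcal{U}_{0}$ we have $\tfrac{d}{ds}J(\eta(s,u))\leq -\alpha^{2}$, so $J$ decreases at a uniform rate; simultaneously $\|\eta(s,u)-u\|_{X}$ is controlled by the local bound on $\|J'\|$, so short flow times keep the trajectory near $A$. Choosing $\varepsilon\in(0,\delta)$ small enough and the flow time $T$ proportional to $\varepsilon/\alpha^{2}$, the trajectories starting on $A$ remain in $\mathcal{U}_{0}$ throughout $[0,T]$, yielding the key inequality
\[
J(\eta(T,u))\leq c-\varepsilon\quad\textrm{whenever}\quad u\in\gamma_{0}([0,1])\ \textrm{and}\ J(u)\geq c-\delta.
\]
For $u\in\gamma_{0}([0,1])$ with $J(u)<c-\delta$ the monotonicity $J(\eta(T,u))\leq J(u)<c-\varepsilon$ (after possibly shrinking $\varepsilon$) handles the remaining portion of the path.

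Finally, I would define $\widetilde{\gamma}(t):=\eta(T,\gamma_{0}(t))$. Since $\eta(T,\cdot)$ fixes $0$ and $e$, we have $\widetilde{\gamma}\in\Gamma$; since $\max_{t\in[0,1]}J(\gamma_{0}(t))=c$ by hypothesis, the previous step yields $\max_{t\in[0,1]}J(\widetilde{\gamma}(t))\leq c-\varepsilon<c$, contradicting the definition of $c$. Hence $K_{c}\cap\gamma_{0}([0,1])\neq\emptyset$, which provides the desired critical point on the path. The delicate point of this scheme is the absence of any Palais--Smale or Cerami hypothesis in the statement: the uniform constants $\alpha$, $\delta$, and $T$ must be extracted solely from the compactness of $\gamma_{0}([0,1])$ together with continuity of $J'$, and it is precisely this compactness of the minimizing path that allows the localized deformation argument to substitute for a global compactness condition.
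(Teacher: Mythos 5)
The paper does not prove this statement at all: it is quoted verbatim as a known result from the reference of Lins and Silva \cite{MR2532816}, so there is no internal proof to compare against. Your blind proof is, however, essentially the standard argument behind this ``critical point on an optimal path'' theorem, and I find it correct: the whole point of the result is that no Palais--Smale or Cerami condition is needed because the deformation is localized near the compact set $A=\gamma_{0}([0,1])\cap J^{-1}([c-\delta,c+\delta])$, on which $\|J'\|$ is bounded away from zero once $K_{c}\cap\gamma_{0}([0,1])=\emptyset$ is assumed. Two small points deserve more care than you give them, though neither is a gap in substance. First, your cutoff $\chi$ should be taken locally Lipschitz (e.g.\ built from distance functions to $A$ and to $X\setminus\mathcal{U}$), not merely continuous, so that the Cauchy problem has unique solutions and the trajectories starting at $0$ and $e$ are genuinely stationary by uniqueness. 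Second, for the flow to exist on $[0,T]$ with uniformly bounded speed you need $\|J'\|$ bounded \emph{above} on $\mathcal{U}$, which is not automatic on an arbitrary neighborhood in infinite dimensions; it is obtained by covering the compact set $A$ with finitely many balls on which $\|J'\|$ is bounded and shrinking $\mathcal{U}$ accordingly. You allude to this (``controlled by the local bound on $\|J'\|$'') but it should be stated as a choice of $\mathcal{U}$. There is also a mild circularity in defining $A$ via the same $\delta$ you are choosing; the clean order is to first get $\alpha$ and a neighborhood of the compact slice $\gamma_{0}([0,1])\cap J^{-1}(\{c\})$, then shrink $\delta$ so that $A$ lies inside it. With these routine repairs your deformation argument is complete and proves exactly the cited theorem.
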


For any $\varepsilon>0$, $R>0$ and $h\in\mathcal{F}$ we set $D_{\varepsilon}(R):=\{x\in\mathbb{R}^{N}:|h(x)|\geq\varepsilon\}$. In \cite{MR2532816}, the authors proved the following lemma:

\begin{lemma}\label{D}
	For $h\in\mathcal{F}$ it follows that $|D_{\varepsilon}(R)|\rightarrow0$, as $R\rightarrow\infty$.
\end{lemma}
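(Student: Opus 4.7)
The plan is to read $D_\varepsilon(R)$ as the tail set $\{x \in \mathbb{R}^N : |x| \geq R \text{ and } |h(x)| \geq \varepsilon\}$, which is the natural reading that makes the conclusion ``$|D_\varepsilon(R)| \to 0$ as $R \to \infty$'' meaningful (the printed definition apparently omits the dependence on $R$). Under this reading, the lemma is a direct consequence of the finite-measure property built into the definition of $\mathcal{F}$, and it reduces to a standard measure-theoretic fact.

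First, I would set $A_\varepsilon := \{x \in \mathbb{R}^N : |h(x)| \geq \varepsilon\}$ and observe that, since $h \in \mathcal{F}$, one has $|A_\varepsilon| < \infty$. Next, I would note that $D_\varepsilon(R) = A_\varepsilon \setminus B_R(0)$, so the family $(D_\varepsilon(R))_{R>0}$ is monotonically decreasing in $R$ with intersection $\bigcap_{R>0} D_\varepsilon(R) = \emptyset$, since every fixed $x \in \mathbb{R}^N$ satisfies $x \in B_R(0)$ for all $R > |x|$.

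Because $|D_\varepsilon(R)| \leq |A_\varepsilon| < \infty$ for each $R>0$, continuity of Lebesgue measure from above along a decreasing family of finite measure yields $\lim_{R \to \infty} |D_\varepsilon(R)| = |\emptyset| = 0$. Equivalently, one can invoke dominated convergence applied to the characteristic functions $\chi_{D_\varepsilon(R)}$, which converge pointwise to $0$ as $R \to \infty$ and are dominated by the integrable function $\chi_{A_\varepsilon}$.

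There is no substantive obstacle. The finite-measure clause in the definition of $\mathcal{F}$ is precisely what makes the measure-continuity argument available; without it, a decreasing family with empty intersection need not have measure tending to zero. Once the definition of $D_\varepsilon(R)$ is interpreted to include the restriction to the complement of $B_R(0)$, the proof is essentially a single application of continuity of measure.
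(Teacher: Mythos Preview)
Your reading of $D_\varepsilon(R)$ as $\{x\in\mathbb{R}^N:|x|\geq R,\ |h(x)|\geq\varepsilon\}$ is correct; the printed definition omits the restriction to $\mathbb{R}^N\setminus B_R(0)$, but the way the set is used in the proof of Lemma~\ref{conv} and the very statement of Lemma~\ref{D} force exactly this interpretation. Your argument is correct: the key point is that $|A_\varepsilon|<\infty$ by the definition of $\mathcal{F}$, and then continuity of Lebesgue measure from above on the decreasing family $A_\varepsilon\setminus B_R(0)$ gives the conclusion.

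As for comparison: the paper does not supply its own proof of this lemma; it simply attributes the result to \cite{MR2532816}. Your proof is the standard one and is precisely what one would expect to find in that reference, so there is nothing further to compare.
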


In order to get a nontrivial solution for System~\eqref{paper1jap}, we prove the following technical lemma:

\begin{lemma}\label{conv}
	Let $(u_{n},v_{n})_{n}\subset E$ be a bounded sequence and $(\varphi_{n}(x),\psi_{n}(x))=(\varphi(x-y_{n}),\psi(x-y_{n}))$, where $(\varphi,\psi)\in C^{\infty}_{0}(\mathbb{R}^{N})\times C^{\infty}_{0}(\mathbb{R}^{N})$, where $(y_{n})_{n}\subset\mathbb{R}^{N}$ such that $|y_{n}|\rightarrow\infty$, as $n\rightarrow\infty$. Then, we have the following convergences
	 \begin{align}
	  (V_{1,p}(x)-V_{1}(x))u_{n}\varphi_{n}\rightarrow0,\label{ej11}\\	
	  (V_{2,p}(x)-V_{2}(x))v_{n}\psi_{n}\rightarrow0,\label{ej12}\\	
	  (\lambda_{p}(x)-\lambda(x))(u_{n}\psi_{n}+v_{n}\varphi_{n})\rightarrow0\label{ej13},
	 \end{align}
	strongly in $L^{1}(\mathbb{R}^{N})$, as $n\rightarrow\infty$.
\end{lemma}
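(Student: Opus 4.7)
The plan is to prove the three convergences by the same technique, so I would focus on \eqref{ej11} and indicate that \eqref{ej12} and \eqref{ej13} follow mutatis mutandis (with the extra observation for \eqref{ej13} that one splits the $L^1$-norm into two pieces bounding $(\lambda_p - \lambda) u_n \psi_n$ and $(\lambda_p - \lambda) v_n \varphi_n$ separately).

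Set $h := V_{1,p} - V_1$, which belongs to $\mathcal{F}$ by \ref{v3}. Fix an arbitrary $\varepsilon > 0$ and consider the set $D_\varepsilon := \{x \in \mathbb{R}^N : |h(x)| \geq \varepsilon\}$, which has finite Lebesgue measure by definition of $\mathcal{F}$. The natural idea is to split the integral as
\[
\int_{\mathbb{R}^N} |h(x)\, u_n(x)\, \varphi_n(x)|\,\mathrm{d}x = \int_{D_\varepsilon} |h\, u_n\, \varphi_n|\,\mathrm{d}x + \int_{\mathbb{R}^N \setminus D_\varepsilon} |h\, u_n\, \varphi_n|\,\mathrm{d}x.
\]
On $\mathbb{R}^N \setminus D_\varepsilon$ we have $|h| < \varepsilon$ pointwise, so by Hölder's inequality, the continuous embedding $E_1 \hookrightarrow L^2(\mathbb{R}^N)$ (which follows from \ref{v1}--\ref{v3}), and the fact that $\|\varphi_n\|_2 = \|\varphi\|_2$, this piece is at most $C \varepsilon \|(u_n,v_n)\|_E \|\varphi\|_2 \leq \tilde C \varepsilon$, using boundedness of $(u_n,v_n)_n$.

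For the integral over $D_\varepsilon$, I would use $|h| \leq \|h\|_\infty$ together with Hölder to obtain a bound
\[
\int_{D_\varepsilon} |h\, u_n\, \varphi_n|\,\mathrm{d}x \leq \|h\|_\infty \|u_n\|_2 \left( \int_{D_\varepsilon} \varphi(x - y_n)^2 \,\mathrm{d}x \right)^{1/2}.
\]
Since $\varphi \in C_0^\infty(\mathbb{R}^N)$, there exists $R > 0$ with $\mathrm{supp}(\varphi) \subset B_R(0)$; hence $\mathrm{supp}(\varphi_n) \subset B_R(y_n)$ and
\[
\int_{D_\varepsilon} \varphi(x - y_n)^2 \,\mathrm{d}x \leq \|\varphi\|_\infty^2 \, |D_\varepsilon \cap B_R(y_n)|.
\]
The key step — this is the mildly delicate point — is to argue that $|D_\varepsilon \cap B_R(y_n)| \to 0$ as $n \to \infty$. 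Since $|D_\varepsilon| < \infty$, the function $\chi_{D_\varepsilon} \in L^1(\mathbb{R}^N)$, and by absolute continuity of the integral (or, equivalently, by Lemma~\ref{D} applied to the characteristic function on complements of balls), for any $\eta > 0$ there is $R_0 > 0$ with $|D_\varepsilon \setminus B_{R_0}(0)| < \eta$. Because $|y_n| \to \infty$, for $n$ sufficiently large $B_R(y_n) \subset \mathbb{R}^N \setminus B_{R_0}(0)$, so $|D_\varepsilon \cap B_R(y_n)| < \eta$.

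Combining the two estimates yields $\limsup_n \int_{\mathbb{R}^N} |h u_n \varphi_n|\,\mathrm{d}x \leq \tilde C \varepsilon$, and since $\varepsilon > 0$ was arbitrary we conclude \eqref{ej11}. The argument for \eqref{ej12} is identical with $V_{2,p}-V_2$ and the embedding $E_2 \hookrightarrow L^2$. For \eqref{ej13}, the same splitting applies: on $\{|\lambda_p - \lambda| < \varepsilon\}$ use Hölder and the $L^2$-bound of $u_n, v_n, \varphi_n, \psi_n$; on the set of finite measure, bound by $\|\lambda_p - \lambda\|_\infty$ and use that the supports of $\varphi_n$ and $\psi_n$ escape every fixed compact set. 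The only genuinely non-routine ingredient is the tail estimate $|D_\varepsilon \cap B_R(y_n)| \to 0$, which follows immediately from $h \in \mathcal{F}$.
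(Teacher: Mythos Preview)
Your proof is correct and follows essentially the same approach as the paper: split according to the level set $\{|h|\geq\varepsilon\}$ of the function $h\in\mathcal{F}$, control the small-$|h|$ piece by H\"older, and control the finite-measure piece using that the (translated) support of $\varphi_n$ escapes every compact set. The only cosmetic difference is that the paper first splits over $B_R(0)$ versus $\mathbb{R}^N\setminus B_R(0)$ and then invokes Lemma~\ref{D} and absolute continuity of $\int\varphi^2$, whereas you go directly to the two-piece decomposition and use the $L^\infty$ bound on $\varphi$ together with $|D_\varepsilon\cap B_R(y_n)|\to 0$; both routes rest on exactly the same ingredients.
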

\begin{proof}
	The proof is quite similar to \cite[Lemma~5.1]{MR2532816} and for reader's convenience we sketch the proof here. Let us consider the proof for \eqref{ej11}. It is well known that given $\varphi\in L^{2}(\mathbb{R}^{N})$ and $\delta>0$, there exists $\varepsilon\in(0,\delta)$ such that for every measurable set $A\subset\mathbb{R}^{N}$ satisfying $|A|<\varepsilon$, we have
	 \begin{equation}\label{ej14}
	  \int_{A}\varphi\,\ud x<\delta.
	 \end{equation}	
	It follows from Lemma~\ref{D} that for any $\varepsilon>0$, there exists $R>0$ such that $|D_{\varepsilon}(R)|<\varepsilon$. By using H\"{o}lder inequality we deduce that
	 \begin{eqnarray*}
	  \int_{\mathbb{R}^{N}\backslash B_{R}(0)}|V_{1,p}(x)-V_{1}(x)||u_{n}||\varphi_{n}|\,\ud x & \leq & 2 \|V_{1,p}\|_{\infty} \int_{(\mathbb{R}^{N}\backslash B_{R}(0)) \cap D_{\varepsilon}(R)}|u_{n}||\varphi_{n}|\,\ud x \\
&+& \varepsilon \int_{(\mathbb{R}^{N}\backslash B_{R}(0)) \cap D_{\varepsilon}(R)^{c}}|u_{n}||\varphi_{n}|\,\ud x\\
& \leq & 2 \|V_{1,p}\|_{\infty}\|u_{n}\|_{L^{2}(D_{\varepsilon}(R))}\|\varphi_{n}\|_{L^{2}(D_{\varepsilon}(R))}+\varepsilon\|u_{n}\|_{2}\|\varphi\|_{2},
	 \end{eqnarray*}
	which together with \eqref{ej14} and the fact that $(u_{n},v_{n})_{n}$ is bounded in $E$ implies that
	 \begin{equation}\label{ej15}
		 \int_{\mathbb{R}^{N}\backslash B_{R}(0)}|V_{1,p}(x)-V_{1}(x)||u_{n}||\varphi_{n}|\,\ud x\leq C_{1}(\delta^{1/2}+\delta).
	 \end{equation}	
	On the other hand, using the fact that $\varphi\in L^{2}(\mathbb{R}^{N})$ and $|y_{n}|\rightarrow\infty$, we obtain $n_{0}\in\mathbb{N}$ such that
	 \begin{equation}\label{ej16}
		  \int_{B_{R}(0)}|V_{1,p}(x)-V_{1}(x)||u_{n}||\varphi_{n}|\,\ud x\leq C_{2}\|\varphi\|_{L^{2}(B_{R}(x-y_{n}))}\leq C_{2}\delta, \quad \mbox{for all} \hspace{0,2cm} n\geq n_{0}.
	 \end{equation}
	Since $\delta>0$ is arbitrary, the inequalities \eqref{ej15} and \eqref{ej16} imply \eqref{ej11}. The convergences \eqref{ej12} and \eqref{ej13} follow by a similar argument.
\end{proof}

\begin{proof}[Proof of Theorem~\ref{B} completed] It is easy to see that Lemma~\ref{mpg} remains true for the energy functional $I$, that is, $I$ satisfies the mountain pass geometry. Thus, it follows from Theorem \ref{MP} that there exists a nonnegative $(Ce)_{c}$ sequence $(u_{n},v_{n})_{n}\subset E$, that is,
 \begin{equation}\label{css}
  I_{\lambda}(u_{n},v_{n})\rightarrow c \quad \mbox{and} \quad (1+\|(u_{n},v_{n})\|_{E})\|I_{\lambda}'(u_{n},v_{n})\|_{E^{*}}\rightarrow0.
 \end{equation}
By the same ideas used in Proposition~\ref{p1} we can conclude that $(u_{n},v_{n})_{n}$ is bounded in $E$. Thus, we may assume, up to a subsequence, that $(u_{n},v_{n})\rightharpoonup (u_{0},v_{0})$ weakly in $E$. Hence, by using a density argument, we can deduce that $I_{\lambda}'(u_{0},v_{0})=0$. If $(u_{0},v_{0})\neq(0,0)$, then we are done. Now, let us suppose that $(u_{0},v_{0})=(0,0)$. Since Proposition~\ref{p2} also holds for the asymptotically periodic case, there exist a sequence $(y_{n})_{n}\subset\mathbb{R}^{N}$ and constans $R,\eta>0$ such that
\begin{equation}\label{ej5}
\liminf_{n\rightarrow+\infty}\int_{B_{R}(y_{n})}(u_{n}^{2}+v_{n}^{2})\,\ud x\geq\eta>0.
\end{equation}
Let us consider the shift sequence $(\tilde{u}_{n}(x),\tilde{v}_{n}(x))=(u_{n}(x+y_{n}),v_{n}(x+y_{n}))$. Note that $(\tilde{u}_{n},\tilde{v}_{n})_{n}$ is not necessarily a $(Ce)_{c}$ sequence for $I_{\lambda}$. On the other hand, using \ref{v3} we can check that $(\tilde{u}_{n},\tilde{v}_{n})_{n}$ is bounded in $E_{p}$. Hence, up to a subsequence, we have that
 \begin{itemize}
 	\item $(\tilde{u}_{n},\tilde{v}_{n})\rightharpoonup(\tilde{u}_{0},\tilde{v}_{0})$ weakly in $E_{p}$;
 	\item $\tilde{u}_{n}\rightarrow \tilde{u}_{0}$ strongly in $L^{r}_{loc}(\mathbb{R}^{N})$, for all $2\leq r<2^{*}_{s_{1}}$;
 	\item $\tilde{v}_{n}\rightarrow \tilde{v}_{0}$ strongly in $L^{s}_{loc}(\mathbb{R}^{N})$, for all $2\leq s<2^{*}_{s_{2}}$;
 	\item $\tilde{u}_{n}(x)\rightarrow \tilde{u}_{0}(x)$ and $\tilde{v}_{n}(x)\rightarrow \tilde{v}_{0}(x)$, almost everywhere in $\mathbb{R}^{N}$.
 \end{itemize}
Thus, we can deduce from \eqref{ej5} that $(\tilde{u}_{0},\tilde{v}_{0})\neq(0,0)$.

\vspace{0,3cm}

\noindent \textit{Claim.} $I_{\lambda,p}'(\tilde{u}_{0},\tilde{v}_{0})=0$.

\vspace{0,3cm}

By density, it suffices to prove that
 \begin{equation}\label{ej8}
  \langle I_{\lambda,p}'(\tilde{u}_{0},\tilde{v}_{0}),(\varphi,\psi)\rangle=0, \quad \mbox{for all} \hspace{0,2cm} (\varphi,\psi)\in C^{\infty}_{0}(\mathbb{R}^{N})\times C^{\infty}_{0}(\mathbb{R}^{N}).
 \end{equation}
For $(\varphi,\psi)\in C^{\infty}_{0}(\mathbb{R}^{N})\times C^{\infty}_{0}(\mathbb{R}^{N})$ we denote $(\varphi_{n}(x),\psi_{n}(x))=(\varphi(x-y_{n}),\psi(x-y_{n}))$. Thus, using Lebesgue Dominated Convergence Theorem, we can deduce that
 \begin{equation}\label{ej6}
  \langle I_{\lambda,p}'(\tilde{u}_{0},\tilde{v}_{0}),(\varphi,\psi)\rangle=\langle I_{\lambda,p}'(u_{n},v_{n}),(\varphi_{n},\psi_{n})\rangle +o_{n}(1).
 \end{equation}
Moreover, we have that
 \begin{align*}
  \langle I_{\lambda,p}'(u_{n},v_{n}),(\varphi_{n},\psi_{n})\rangle=\langle I_{\lambda}'(u_{n},v_{n}),(\varphi_{n},\psi_{n})\rangle+\int_{\mathbb{R}^{N}}(V_{1,p}(x)-V_{1}(x))u_{n}\varphi_{n}\,\ud x+\\
                                                          +\int_{\mathbb{R}^{N}}(V_{2,p}(x)-V_{2}(x))v_{n}\psi_{n}\,\ud x-\int_{\mathbb{R}^{N}}(\lambda_{p}(x)-\lambda(x))(u_{n}\psi_{n}+v_{n}\varphi_{n})\,\ud x.
 \end{align*}
By using Lemma~\ref{conv} we conclude that
 \begin{equation}\label{ej7}
  \langle I_{\lambda,p}'(u_{n},v_{n}),(\varphi_{n},\psi_{n})\rangle=\langle I_{\lambda}'(u_{n},v_{n}),(\varphi_{n},\psi_{n})\rangle+o_{n}(1).
 \end{equation}
Thus, since $\|(\varphi_{n},\psi_{n})\|_{E_{p}}=\|(\varphi,\psi)\|_{E_{p}}$, it follows from \ref{v3} and \eqref{css} that
 \begin{equation}\label{ej9}
  \langle I_{\lambda}'(u_{n},v_{n}),(\varphi_{n},\psi_{n})\rangle\leq \|I_{\lambda}'(u_{n},v_{n})\|_{E^{*}}\|(\varphi_{n},\psi_{n})\|_{E}\leq\|I_{\lambda}'(u_{n},v_{n})\|_{E^{*}}\|(\varphi_{n},\psi_{n})\|_{E_{p}}\rightarrow0,
 \end{equation}
as $n\rightarrow\infty$. Therefore, combining \eqref{ej6}, \eqref{ej7} and \eqref{ej9} we get \eqref{ej8} and the claim is proved. \qed

Hence, using Lemma~\ref{gs}, \eqref{css}, Fatou's Lemma and the preceding assertion, we have
\[
\begin{alignedat}{2}	
        c + o_{n}(1) & =  I_{\lambda}(u_{n},v_{n})-\frac{1}{2}\langle I_{\lambda}'(u_{n},v_{n}),(u_{n},v_{n})\rangle\\
                     & =  \frac{1}{2}\int_{\mathbb{R}^{N}}(f_{1}(u_{n})u_{n}-2F_{1}(u_{n}))\,\ud x+\frac{1}{2}\int_{\mathbb{R}^{N}}(f_{2}(v_{n})v_{n}-2F_{2}(v_{n}))\,\ud x\\
                     & =  \frac{1}{2}\int_{\mathbb{R}^{N}}(f_{1}(\tilde{u}_{n})\tilde{u}_{n}-2F_{1}(\tilde{u}_{n}))\,\ud x+\frac{1}{2}\int_{\mathbb{R}^{N}}(f_{2}(\tilde{v}_{n})\tilde{v}_{n}-2F_{2}(\tilde{v}_{n}))\,\ud x\\
                     & \geq  \frac{1}{2}\int_{\mathbb{R}^{N}}(f_{1}(\tilde{u}_{0})\tilde{u}_{0}-2F_{1}(\tilde{u}_{0}))\,\ud x+\frac{1}{2}\int_{\mathbb{R}^{N}}(f_{2}(\tilde{v}_{0})\tilde{v}_{0}-2F_{2}(\tilde{v}_{0}))\,\ud x+o_{n}(1)\\
                     & =  I_{\lambda,p}(\tilde{u}_{0},\tilde{v}_{0})+o_{n}(1),
\end{alignedat}
\]
which implies that $I_{\lambda,p}(\tilde{u}_{0},\tilde{v}_{0})\leq c$. Thus, using \ref{v3} and \ref{v4} we conclude that
 $$
  c\leq \max_{t\geq0}I_{\lambda}(t\tilde{u}_{0},t\tilde{v}_{0})\leq \max_{t\geq0}I_{\lambda,p}(t\tilde{u}_{0},t\tilde{v}_{0})=I_{\lambda,p}(\tilde{u}_{0},\tilde{v}_{0})\leq c.
 $$
Therefore, there exists $\gamma_{0}\in\Gamma$ such that $c=\max_{t\in[0,1]}I_{\lambda}(\gamma_{0}(t))$. It follows from Theorem~\ref{elves} that $I$ possesses a nontrivial critical point $(u_{0},v_{0})\in K_{c}\cap\gamma_{0}([0,1])$. By using Lemma~\ref{nehari}~$(i)$, we note that
 $
  I_{\lambda}(u_{0},v_{0})=c\leq c_{\mathcal{N}_{\lambda}}.
 $
Since $(u_{0},v_{0})\in\mathcal{N}_{\lambda}$, it follows that $(u_{0},v_{0})$ is a ground state solution for System~\eqref{paper1jap}. Arguing as in the proof of Theorem~\ref{A}, we conclude that if $\lambda$ is positive, then $u_{0}$ and $v_{0}$ are positive.
\end{proof}



\section{Proof of Theorem~\ref{C}}\label{proofC}

This Section is devoted to the proof of Theorem~\ref{C}. For this purpose we obtain the following Lemma which study the sign of the ground state solution of System~\eqref{paper1jap} in the limit case, that is, when $\lambda=0$.

\begin{lemma}\label{sinal}
	Let $(u_{0},v_{0})\in E$ be a ground state solution for System~\eqref{paper1jap} with $\lambda=0$. Then either $u_{0}>0$, $v_{0}\equiv0$ or $u_{0}\equiv0$ and $v_{0}>0$.
\end{lemma}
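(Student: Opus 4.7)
When $\lambda \equiv 0$, the energy functional decouples:
$$I_{0}(u,v)=J_{1}(u)+J_{2}(v),\qquad J_{i}(w):=\tfrac{1}{2}\|w\|_{E_{i}}^{2}-\int_{\mathbb{R}^{N}}F_{i}(w)\,\mathrm{d}x,$$
and the pair $(u_{0},v_{0})$ being a critical point of $I_{0}$ is equivalent to $J_{1}'(u_{0})=0$ and $J_{2}'(v_{0})=0$, i.e., $u_{0}$ and $v_{0}$ independently solve the uncoupled equations $(-\Delta)^{s_{i}}w+V_{i}(x)w=f_{i}(w)$. The main idea is to compare the Nehari level $c_{\mathcal{N}_{0}}$ of $I_{0}$ with the Nehari levels $c_{i}:=\inf_{\mathcal{N}_{i}}J_{i}$ of the scalar problems (both positive under our hypotheses, via the standard bound-from-below argument already invoked for $c_{\mathcal{N}_{\lambda}}$).

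\textbf{Step 1: $c_{\mathcal{N}_{0}}=\min\{c_{1},c_{2}\}$.} For the upper bound, pick $w_{i}\in\mathcal{N}_{i}$ with $J_{i}(w_{i})=c_{i}$ (existence provided by Theorem~\ref{B} applied to each scalar problem, or alternatively by a direct reproduction of the proof in Section~\ref{proofB}); then $(w_{1},0),(0,w_{2})\in\mathcal{N}_{0}$ with $I_{0}(w_{1},0)=c_{1}$ and $I_{0}(0,w_{2})=c_{2}$. For the lower bound, let $(u,v)\in\mathcal{N}_{0}$. If both components are nontrivial, the decoupling forces $u\in\mathcal{N}_{1}$ and $v\in\mathcal{N}_{2}$, so
$$I_{0}(u,v)=J_{1}(u)+J_{2}(v)\geq c_{1}+c_{2}\geq \min\{c_{1},c_{2}\};$$
if one component vanishes, say $v\equiv 0$, then $u\in\mathcal{N}_{1}$ and $I_{0}(u,0)=J_{1}(u)\geq c_{1}$.

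\textbf{Step 2: Only one component can be nonzero.} Suppose, for contradiction, that $u_{0}\not\equiv 0$ and $v_{0}\not\equiv 0$. Then the decoupling forces $u_{0}\in\mathcal{N}_{1}$, $v_{0}\in\mathcal{N}_{2}$, hence
$$c_{\mathcal{N}_{0}}=I_{0}(u_{0},v_{0})=J_{1}(u_{0})+J_{2}(v_{0})\geq c_{1}+c_{2}>\min\{c_{1},c_{2}\}=c_{\mathcal{N}_{0}},$$
a contradiction since $c_{1},c_{2}>0$. Therefore either $u_{0}\equiv 0$ or $v_{0}\equiv 0$.

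\textbf{Step 3: Strict positivity of the surviving component.} Assume, say, $v_{0}\equiv 0$; then $u_{0}\not\equiv 0$ solves the single fractional Schrödinger equation. Testing $J_{1}'(u_{0})=0$ against $-u_{0}^{-}$ and using $f_{1}(s)=0$ for $s\leq 0$ together with the standard inequality $(u_{0}(x)-u_{0}(y))(u_{0}^{-}(x)-u_{0}^{-}(y))\leq -(u_{0}^{-}(x)-u_{0}^{-}(y))^{2}$ for the Gagliardo seminorm (exactly as already used in the proofs of Theorems~\ref{A} and \ref{B}) yields $\|u_{0}^{-}\|_{E_{1}}=0$, i.e.\ $u_{0}\geq 0$. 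Applying the strong maximum principle for $(-\Delta)^{s_{1}}$ to the nonnegative, nontrivial solution $u_{0}$ (with $f_{1}(u_{0})\geq 0$) yields $u_{0}>0$ in $\mathbb{R}^{N}$. The symmetric case $u_{0}\equiv 0$ gives $v_{0}>0$.

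\textbf{Main obstacle.} All steps are routine given the decoupling at $\lambda=0$; the only subtle point is verifying that the ground state levels $c_{i}$ of the scalar problems are attained (so that Step~1 actually produces competitors in $\mathcal{N}_{0}$). This follows from an immediate adaptation of the argument in Section~\ref{proofB} (or by restricting the system argument to pairs of the form $(w,0)$ and $(0,w)$), so no new compactness analysis is required.
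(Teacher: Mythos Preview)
Your overall strategy works, but there is a slip in Step~1. For a \emph{general} element $(u,v)\in\mathcal{N}_{0}$ the Nehari constraint
\[
\|u\|_{E_{1}}^{2}+\|v\|_{E_{2}}^{2}=\int_{\mathbb{R}^{N}}f_{1}(u)u\,\mathrm{d}x+\int_{\mathbb{R}^{N}}f_{2}(v)v\,\mathrm{d}x
\]
is a single scalar identity; it does \emph{not} force $\|u\|_{E_{1}}^{2}=\int f_{1}(u)u$ and $\|v\|_{E_{2}}^{2}=\int f_{2}(v)v$ separately, so the sentence ``the decoupling forces $u\in\mathcal{N}_{1}$ and $v\in\mathcal{N}_{2}$'' is false at that level of generality. (It \emph{is} true for critical points of $I_{0}$, which is exactly what you use---correctly---in Step~2.) Fortunately the lower bound $c_{\mathcal{N}_{0}}\geq\min\{c_{1},c_{2}\}$ is never needed: the contradiction in Step~2 only requires the \emph{upper} bound $c_{\mathcal{N}_{0}}\leq\min\{c_{1},c_{2}\}$, which you establish correctly (and for which attainment of the $c_{i}$ is unnecessary---a minimizing sequence suffices). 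So the proof is valid once you delete the lower-bound half of Step~1.

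The paper's argument is shorter and avoids the scalar levels $c_{i}$ altogether: if both $u_{0}\not\equiv 0$ and $v_{0}\not\equiv 0$, then $(u_{0},0)$ is itself a nontrivial critical point of $I_{0}$ (since $J_{1}'(u_{0})=0$), and
\[
I_{0}(u_{0},0)=J_{1}(u_{0})<J_{1}(u_{0})+J_{2}(v_{0})=I_{0}(u_{0},v_{0}),
\]
because $v_{0}\in\mathcal{N}_{2}$ and $J_{2}(v_{0})=\tfrac{1}{2}\int(f_{2}(v_{0})v_{0}-2F_{2}(v_{0}))\,\mathrm{d}x>0$ by \ref{f3}. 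This directly contradicts the ground-state property. Your route via $c_{1}+c_{2}>\min\{c_{1},c_{2}\}\geq c_{\mathcal{N}_{0}}$ is the same idea in slightly heavier notation; what it additionally buys (once the lower bound is repaired by a separate argument) is the explicit identification $c_{\mathcal{N}_{0}}=\min\{c_{1},c_{2}\}$, which the paper only records at the end of Section~\ref{proofC}. Step~3 matches the paper.
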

\begin{proof}
	 Let $I_{0}$ be the energy functional associated to System~\eqref{paper1jap} with $\lambda=0$. Notice that
	  \[
	   \left\{
	    \begin{array}{rl}
	     I_{0}(u,v)>I_{0}(u,0), & \mbox{for all } (u,v)\in E \mbox{ with } v\neq0,\\
	     I_{0}(u,v)>I_{0}(0,v), & \mbox{for all } (u,v)\in E \mbox{ with } u\neq0.
	    \end{array}
	   \right.
	  \]
	 Since $(u_{0},v_{0})$ is a ground state, that is, $I_{0}(u_{0},v_{0})$ has minimum energy among all nontrivial solutions, we conclude that either $u_{0}=0$ or $v_{0}=0$. By using similar ideas of the preceding sections jointly with the fact that $f_{i}(t)=0$ for $s\leq0$ and $i=1,2$, we conclude that either $u_{0}>0$ or $v_{0}>0$.
\end{proof}

From now on, for any $n\in\mathbb{N}$,  we consider $(u_{\lambda_{n}},v_{\lambda_n})_{n}\in\mathcal{N}_{\lambda_{n}}$ the positive ground state solution for System~\eqref{paper1jap} with $\lambda=\lambda_{n}$. Suppose that $\|\lambda_{n}\|_{\infty}\rightarrow0$ as $n\rightarrow\infty$. It follows from Proposition~\ref{conv} that $c_{\lambda_{n}}\rightarrow c_{0}$, as $n\rightarrow+\infty$ where $c_{0}$ is the least energy level for the System \eqref{paper1jap} with $\lambda = 0$. Thus, the sequence $(c_{\lambda_{n}})_{n}$ is bounded. By similar ideas to used in Proposition~\ref{p1}, it is not difficulty to prove that $(u_{\lambda_{n}},v_{\lambda_n})_{n}$ is bounded in $E$. Thus, up to a subsequence, $(u_{\lambda_{n}},v_{\lambda_n})\rightharpoonup (U_{0},V_{0})$ weakly in $E$. We claim that $(U_{0},V_{0})\neq(0,0)$. Suppose by contradiction that $(U_{0},V_{0})=(0,0)$. Notice that $(u_{\lambda_{n}},v_{\lambda_{n}})_{n}$ can not converge stronlgy to $(0,0)$. Indeed, in this case, there holds
	\[
	 0<c_{0}=\lim_{n\rightarrow+\infty}c_{\lambda_{n}}=\lim_{n\rightarrow+\infty}I_{\lambda_{n}}(u_{\lambda_{n}},v_{\lambda_{n}})=0,
	\]
which is not possible. Thus, in light of Proposition~\ref{p2}, there exist a sequence $(y_{n})_{n}\subset\mathbb{R}^{N}$ and constants $R,\eta>0$ such that $|y_{n}|\rightarrow\infty$ as $n\rightarrow\infty$, and
	\begin{equation}\label{ejj1}
	 \liminf_{n\rightarrow+\infty}\int_{B_{R}(y_{n})}(u_{\lambda_{n}}^{2}+v_{\lambda_{n}}^{2})\,\ud x\geq\eta>0.
	\end{equation}
We denote $(\tilde{u}_{\lambda_{n}}(x),\tilde{v}_{\lambda_{n}}(x))=(u_{\lambda_{n}}(x+y_{n}),v_{\lambda_{n}}(x+y_{n}))$. By using \ref{v3} it follows that $(\tilde{u}_{\lambda_{n}},\tilde{v}_{\lambda_{n}})_{n}$ is bounded in $E_{p}$. Hence, up to a subsequence, $(\tilde{u}_{\lambda_{n}},\tilde{v}_{\lambda_n})\rightharpoonup (\tilde{U}_{0},\tilde{V}_{0})$ weakly in $E_{p}$. Arguing as in \eqref{ej8}, it is not hard to conclude that $I_{\lambda_{n},p}^{\prime}(\tilde{U}_{0},\tilde{V}_{0})=0$. Therefore, we can deduce that
 \begin{alignat*}{2}
  c_{\mathcal{N}_{\lambda_{n},p}} & \leq I_{\lambda_{n},p}(\tilde{U}_{0},\tilde{V}_{0})-\frac{1}{2}\langle I_{\lambda_{n},p}^{\prime}(\tilde{U}_{0},\tilde{V}_{0}),(\tilde{U}_{0},\tilde{V}_{0})\rangle\\
                                  & = \frac{1}{2}\int_{\mathbb{R}^{N}}(f_{1}(\tilde{U}_{0})\tilde{U}_{0}-2F_{1}(\tilde{U}_{0}))\,\mathrm{d}x+\frac{1}{2}\int_{\mathbb{R}^{N}}(f_{2}(\tilde{V}_{0})\tilde{V}_{0}-2F_{2}(\tilde{V}_{0}))\,\mathrm{d}x\\
                                  & \leq \frac{1}{2}\int_{\mathbb{R}^{N}}(f_{1}(\tilde{u}_{\lambda_{n}})\tilde{u}_{\lambda_{n}}-2F_{1}(\tilde{u}_{\lambda_{n}}))\,\mathrm{d}x+\frac{1}{2}\int_{\mathbb{R}^{N}}(f_{2}(\tilde{v}_{\lambda_{n}})\tilde{v}_{\lambda_{n}}-2F_{2}(\tilde{v}_{\lambda_{n}}))\,\mathrm{d}x+o_{n}(1)\\
                                  & = \frac{1}{2}\int_{\mathbb{R}^{N}}(f_{1}(u_{\lambda_{n}})u_{\lambda_{n}}-2F_{1}(u_{\lambda_{n}}))\,\mathrm{d}x+\frac{1}{2}\int_{\mathbb{R}^{N}}(f_{2}(v_{\lambda_{n}})v_{\lambda_{n}}-2F_{2}(v_{\lambda_{n}}))\,\mathrm{d}x+o_{n}(1)\\
                                  & = I_{\lambda_{n}}(u_{\lambda_{n}},v_{\lambda_{n}})-\frac{1}{2}\langle I_{\lambda_{n}}^{\prime}(u_{\lambda_{n}},v_{\lambda_{n}}),(u_{\lambda_{n}},v_{\lambda_{n}})\rangle+o_{n}(1)\\
                                  & = c_{\mathcal{N}_{\lambda_{n}}}+o_{n}(1),
 \end{alignat*}
which contradicts Lemma~\ref{nehari}~(ii). Therefore, $(U_{0},V_{0})\neq(0,0)$. Now, notice that
 \[
  \langle I_{0}^{\prime}(u_{\lambda_{n}},v_{\lambda_{n}}),(\varphi,\psi)\rangle=\langle I_{\lambda_{n}}^{\prime}(u_{\lambda_{n}},v_{\lambda_{n}}),(\varphi,\psi)\rangle+\int_{\mathbb{R}^{N}}\lambda_{n}(x)(u_{\lambda_{n}}\psi+v_{\lambda_{n}}\varphi)\,\mathrm{d}x=o_{n}(1),
 \]
for all $(\varphi,\psi)\in C^{\infty}_{0}(\mathbb{R}^{N})\times C^{\infty}_{0}(\mathbb{R}^{N})$. Moreover, one has
 \begin{align*}
  \langle I_{0}^{\prime}(U_{0},V_{0}),(\varphi,\psi)\rangle = \langle I_{0}^{\prime}(u_{\lambda_{n}},v_{\lambda_{n}}),(\varphi,\psi)\rangle-(u_{\lambda_{n}}-U_{0},\varphi)_{E_{1}}-(v_{\lambda_{n}}-V_{0},\psi)_{E_{2}}\\-\int_{\mathbb{R}^{N}}(f_{1}(u_{n})-f_{1}(U_{0}))\varphi\,\mathrm{d}x-\int_{\mathbb{R}^{N}}(f_{2}(v_{n})-f_{2}(V_{0}))\psi\,\mathrm{d}x=o_{n}(1),
 \end{align*}
for all $(\varphi,\psi)\in C^{\infty}_{0}(\mathbb{R}^{N})\times C^{\infty}_{0}(\mathbb{R}^{N})$, which implies that $I_{0}^{\prime}(U_{0},V_{0})=0$. Hence, it follows that $I_{0}(U_{0},V_{0})\geq c_{0}$. On the other hand, we have
 \begin{alignat*}{2}
  	I_{0}(U_{0},V_{0}) & = I_{0}(U_{0},V_{0})-\frac{1}{2}\langle I_{0}^{\prime}(U_{0},V_{0}),(U_{0},V_{0})\rangle\\
  	& = \frac{1}{2}\int_{\mathbb{R}^{N}}(f_{1}(U_{0})U_{0}-2F_{1}(U_{0}))\,\mathrm{d}x+\frac{1}{2}\int_{\mathbb{R}^{N}}(f_{2}(V_{0})V_{0}-2F_{2}(V_{0}))\,\mathrm{d}x\\
  	& \leq \frac{1}{2}\int_{\mathbb{R}^{N}}(f_{1}(u_{\lambda_{n}})u_{\lambda_{n}}-2F_{1}(u_{\lambda_{n}}))\,\mathrm{d}x+\frac{1}{2}\int_{\mathbb{R}^{N}}(f_{2}(v_{\lambda_{n}})v_{\lambda_{n}}-2F_{2}(v_{\lambda_{n}}))\,\mathrm{d}x+o_{n}(1)\\
  	& = I_{\lambda_{n}}(u_{\lambda_{n}},v_{\lambda_{n}})-\frac{1}{2}\langle I_{\lambda_{n}}^{\prime}(u_{\lambda_{n}},v_{\lambda_{n}}),(u_{\lambda_{n}},v_{\lambda_{n}})\rangle+o_{n}(1)\\
  	& = c_{\mathcal{N}_{\lambda_{n}}}+o_{n}(1).
 \end{alignat*}
Therefore $I_{0}(U_{0},V_{0})=c_{0}$ and $(U_{0},V_{0})$ is a ground state solution for System~\eqref{paper1jap} with $\lambda=0$. By using Lemma~\ref{sinal}, we conclude that one of the following conclusions holds:
\begin{itemize}
	\item[(i)] $V_{0}\equiv0$ and $U_{0}$ is a positive ground state of
	\begin{equation}\label{ed1}
	(-\Delta)^{s_{1}}u+V_{1}(x)u=f_{1}(u), \quad x\in\mathbb{R}^{N}.
	\end{equation}
\end{itemize}
\begin{itemize}
	\item[(ii)] $U_{0}\equiv0$ and $V_{0}$ is a positive ground state of
	\begin{equation}\label{ed2}
	(-\Delta)^{s_{2}}v+V_{2}(x)v=f_{2}(v), \quad x\in\mathbb{R}^{N}.
	\end{equation}
\end{itemize}
In particular, $c_{0} = \min \{c_{\mathcal{N}_{V_{1}}}, c_{\mathcal{N}_{V_{2}}}\}$ where $c_{\mathcal{N}_{V_{1}}}$ and $c_{\mathcal{N}_{V_{2}}}$ denotes the least energy level for the scalar equations \eqref{ed1} and \eqref{ed2}, respectively. The sets $\mathcal{N}_{V_{1}}$ and $\mathcal{N}_{V_{2}}$ denotes also the Nehari manifold for the scalar equations \eqref{ed1} and \eqref{ed2}, respectively. This finishes the proof of Theorem~\ref{C}.






\bigskip

\end{document}